\newcommandx{\note}[2][1=]{\todo[linecolor=blue,backgroundcolor=blue!25,bordercolor=blue,#1]{#2}}
\definecolor{cbred}{RGB}{228,26,28}
\definecolor{cbblue}{RGB}{55,126,184}
\definecolor{cbgreen}{RGB}{77,175,74}
\definecolor{cbpurple}{RGB}{152,78,163}
\newcommand{\R}{\mathbb{R}}
\newcommand{\w}{w}
\newcommand{\ww}{{\w \w}}
\newcommand{\y}{y}
\newcommand{\trp}{\mathsf{T}}
\newcommand{\norm}[1]{\left\lVert#1\right\rVert}
\newtheorem{theorem}{Theorem}
\DeclareMathOperator{\dist}{dist}
\DeclareMathOperator{\cosec}{cosec}
\theoremstyle{remark}
\newtheorem*{remark}{Remark}
\newtheorem{definition}{Definition}
\newtheorem{lemma}{Lemma}
\begin{document}

\title[Finite difference schemes]{Improved accuracy of monotone finite difference schemes on point clouds and regular grids}
\author[Finlay and Oberman]{Chris Finlay \and Adam Oberman}
%\address{}
%\email{}
%\thanks{}

\begin{abstract}
  Finite difference schemes are the method of choice for solving nonlinear, degenerate elliptic PDEs, because the Barles-Sougandis convergence framework \cite{BSNum} provides sufficient conditions for convergence to the unique viscosity solution \cite{CIL}.
  For anisotropic operators, such as the Monge-Ampere equation, wide stencil schemes are needed \cite{ObermanDiffSchemes}.
  The accuracy of these schemes depends on both the distances to neighbors, $R$,
  and the angular resolution, $d\theta$.  On uniform grids, the accuracy is
  $\mathcal O(R^2 + d\theta)$.   On point clouds, the most accurate schemes are of
  $\mathcal O(R + d\theta)$, by  Froese~\cite{froese_meshfree_2017}.  In this
  work, we construct geometrically motivated schemes of higher accuracy in both
  cases:  order $\mathcal O(R + d\theta^2)$ on point clouds, and $\mathcal O(R^2 + d\theta^2)$ on uniform grids.
\end{abstract}

\date{\today}
%\subjclass[]{}
%\keywords{}
\maketitle

\section{Introduction}

The goal of this paper is to build more accurate convergent discretizations for
the class of nonlinear elliptic partial differential equations \cite{CIL}.  Our
schemes are are implemented in both two and three dimensions for a class of
PDEs, which include the convex envelope operator and the Pucci operator, as well
as the Monge-Ampere operator. Convergent discretizations for these operators are
available on uniform grids \cite{oberman_wide_2008}, but the accuracy of these
schemes depends on both the distances to neighbors, $R$, and the angular
resolution, $d\theta$.  On uniform grids, the accuracy is  $\mathcal O(R^2 +
d\theta)$.    More recently, \cite{froese_meshfree_2017} developed methods on
point clouds of accuracy $\mathcal O(R + d\theta)$.   These schemes were used
for freeform optical design to shape laser beams~\cite{Feng:17}, an application
which required nonuniform grids.  In this work, we construct geometrically
motivated schemes of higher accuracy in both cases:  order $\mathcal O(R +
d\theta^2)$ on point clouds, and $\mathcal O(R^2 + d\theta^2)$ on uniform grids.

Even higher accuracy is possible when the operator is uniformly elliptic.  For
example, in the set of papers
\cite{benamou_monotone_2014,fehrenbach_sparse_2014,mirebeau_anisotropic_2014,mirebeau_minimal_2014},
Mirebeau and coauthors developed a framework for constructing $\mathcal O(h^2)$
monotone and stable schemes for several functions of the eigenvalues of the
Hessian on uniform grids, in two dimensions. Related work for discretization of
convex functions is studied in~\cite{mirebeau2016adaptive}.   Mirebeau studied
monotone discretization of first order (Eikonal type) equations on triangulated
grids \cite{mirebeau_anisotropic_2014} as well as second order Monge-Ampere type
operators~\cite{mirebeau_minimal_2014}.  In the latter case, he obtains nearly
optimal accuracy, but his construction is most effective when the operator is
uniformly elliptic: as the operator degenerates, the width of the stencil
increases.  Moreover, the elegant construction based on the Stern-Brocot tree is
particular to two dimensions.

Higher accuracy is also possible using filtered schemes
\cite{froese2013convergent, oberman2015filtered, bokanowski2016high} Filtered
schemes combine a base monotone scheme with a higher accuracy schemes: however
increases accuracy of the base scheme is beneficial to the filtered scheme,
since it allows for a smaller filter parameter.

The challenge of building monotone convergent finite difference schemes is
illustrated in \cite{chen2016monotone} and \cite{chen2017multigrid}, 
discretizing the Monge-Ampere equation in two dimensions. In \cite{chen2016monotone}, a mixture of
a 7-point stencil for the cross and a semi-Lagrangian wide stencil was used. The 7-point
stencil was used for the cross derivative when it is monotone; otherwise the wide
stencil was employed. This approach was later extended to a multigrid in
\cite{chen2017multigrid}, but does not fully solve the problem of building
narrow monotone stencils, and has not been generalized to higher dimensions.

Another approach lies between the wide stencil finite difference approach, and
the finite element approach.  In \cite{nochetto2017two} a convergent method on
an unstructured mesh is constructed on two separate scales.  They prove a rate
of convergence (which is stronger than our results, which concern the accuracy
of the discretization).  However, there is a large gap between the rate of
convergence, and the accuracy, which is more consistent with computational
results.  For a recent review, see \cite{neilan_salgado_zhang_2017}.

The need for wide stencils arise from the anisotropy of the operators.  For
isotropic operators, such as the Laplacian, or for operators whose second order
anisotropy happens to align with the grid (essentially combinations of $u_{xx}$
and $u_{yy}$ terms) an adaptive quadtree grid discretization was developed
in~\cite{oberman2016adaptive}. An adaptive quadtree grid was combined with the
$\mathcal O(R+d\theta^2)$ meshfree method of Froese \cite{froese_meshfree_2017}
and filtered schemes \cite{oberman2015filtered, froese2013convergent}  in
\cite{froese_higher2017}.

The main idea of this work is based on locating the reference point within two
triangles (in two dimensions) or simplices (in three or higher dimensions), and
using barycentric coordinates \cite[\S 5.4 p.595]{dahlquist2008numerical} to
write down the discretization.  For first order derivates, only one simplex is
needed.  It is standard to write a gradient of a function based on linear
interpolation, extending this to a directional derivative amounts to computing a
dot product.  However, for second directional derivatives, it is possible to use
two simplices to compute a monotone discretization of the second directional
derivative, with accuracy which depends on the relative sizes of the simplices.

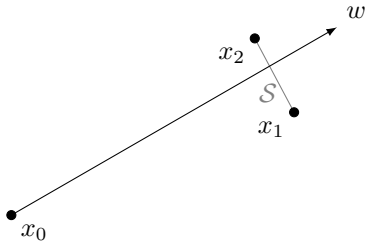
\begin{figure}
  \centering
  \trimbox{-1cm -.5cm -1cm 0cm}{\begin{tikzpicture}[scale=2]
  \draw[fill=black] (0,0) circle (0.03) node [below right] {$ x_0$};
  \draw[gray] (20:2) -- (36:2) node [pos=0.4, below left,inner sep=0.2] {$\mathcal S$};
  \draw[fill=black] (20:2) circle (0.03) node [below left] {$ x_1$};
  \draw[fill=black] (36:2) circle (0.03) node [below left] {$ x_2$};
  \draw[->, >=latex] (30:0) -- (30:2.5) node [above right] {$ w$};
\end{tikzpicture}}
  \caption{A stencil for a first derivative at $ x_0$ in the direction of
    $ w$ uses barycentric coordinates of the simplex $\mathcal S$ formed by  $ x_1$
  and $ x_2$.}
  \label{fig:introfig}
\end{figure}

\subsection{Off-directional discretizations}
When the direction $\w$ does not align with the grid, the $d\theta$ term appears
in the expression for the finite difference accuracy.  If $u$ is discretized on
a regular grid, then one common approach is to choose the nearest grid direction
$ v_h$ to $\w$, and take the finite difference along this approximate
direction, as in \cite{oberman_wide_2008}. In the symmetric case for the second
derivative, the finite difference remains $\mathcal O(h^2)$, but picks up a
directional resolution error $d \theta$. This directional resolution error is
first order, and is given as $d \theta = \arccos \langle  w,  v_h/\norm{
v_h} \rangle$.  Overall this approach is $\mathcal O(d\theta + R^2)$ accurate.
On a grid with spatial resolution $h$, one can show that for a desired angular
resolution $d\theta$, $R$ is $\mathcal O(\frac{h}{d\theta})$. With optimal
choice $d\theta = \left( 2 h^2 \right)^\frac{1}{3}$, this scheme is therefore
formally $\mathcal O(h^\frac{2}{3})$.  Although appealing due to its simplicity,
this scheme suffers some drawbacks. It is only appropriate on uniform finite
difference grids, and encounters difficulties discretizing $u$ near the boundary
of the domain.

Recent work by Froese \cite{froese_meshfree_2017}  treats the more general case
where $u$ is discretized on a cloud of point $\mathcal G$. Froese presents a
monotone finite difference scheme for the second derivative which is $\mathcal
O(R + d\theta)$. The parameter $R$ is a search radius, which will be defined
more precisely later.  Set $h = \sup_{ x\in \Omega} \min_{ x_j \in \mathcal G}
\norm{   x -  x_j }$.  Then  (as in the previous method) for a desired angular
resolution, $R$ is $\mathcal O\left( \frac{h}{d\theta}\right)$, and so with the
optimal choice of $d\theta = \sqrt{h}$, the method is formally $\mathcal O\left(
\sqrt{h} \right)$.  Unfortunately this scheme does not generalize easily to
higher dimensions.

In what follows, we present a monotone and consistent finite difference scheme
for the first and second derivatives which overcomes the deficiencies of the
preceding two methods. For the second derivative, if the grid is not symmetric,
our scheme has accuracy $\mathcal O(R + d\theta^2)$, or formally $\mathcal
O(h^{\frac{2}{3}})$. Further in the symmetric case, the scheme is $\mathcal
O(R^2 + d\theta^2)$, and is formally $\mathcal O(h)$. The method works in
dimension two and higher, and can be used on any set of discretization points,
uniform or otherwise.  It can easily be used near the boundary of a domains. In
particular, the scheme easily handles Neumann boundary conditions on non
rectangular domains.

Using these schemes as building blocks, we build monotone, stable
and consistent schemes for non linear degenerate elliptic equations on arbitrary
meshes.

Table \ref{tab:tab1} presents a summary of the second derivative schemes discussed in this paper.
%We remark that a linear scheme for a second order directional derivative cannot have
%better angular resolution than $\mathcal O\left( d\theta^2 \right)$ and also be
%monotone. Suppose that there is linear scheme that
%\emph{does} have better accuracy. Then it must agree with at least a quadratic
%interpolation of $u$ at scheme grid points. But quadratic interpolation is not
%monotone, which is a contradiction.

\begin{table}
  \centering
  \begin{tabular}{p{3cm} | p{2cm} | p{1.5cm} | p{1.5cm} | p{4cm}}
    \hline
    Scheme & Order & Optimal $d\theta$ & Formal accuracy & Comments \\
    \hline
    \hline
    Nearest grid direction \cite{oberman_wide_2008} & $\mathcal O(r^2 +
      d\theta)$ &$\mathcal O(h^\frac{2}{3})$ &
    $\mathcal O(h^\frac{2}{3})$ &  Uniform grids. Difficulty at
    boundaries.\\
    Froese \cite{froese_meshfree_2017} & $\mathcal O (r + d\theta)$ & $\mathcal
    O (h^\frac{1}{2})$ & $\mathcal O (h^\frac{1}{2})$ &
    2d, mesh free. No problem at boundary. \\
    Linear interpolant, symmetric & $\mathcal O(r^2 + d\theta^2)$ & $\mathcal O
    (h^\frac{1}{2})$ & $\mathcal O(h)$ & $n$-d, uniform grids. No problem at boundary.  \\
    Linear interpolant, non symmetric & $\mathcal O(r + d\theta^2)$ & $\mathcal O
    (h^\frac{1}{3})$ & $\mathcal O(h^\frac{2}{3})$ & $n$-d, mesh free. No problem at boundary.  \\
    \hline
  \end{tabular}
  \caption{Comparison of the discretizations.}
  \label{tab:tab1}
\end{table}

\subsection{Directional discretizations}
The basic building block of our discretization are first and second order directional derivatives.
This is in contrast to the work of Mirebeau, where two dimensional shapes built up of triangles are chosen to match the ellipticity of the operator.

Write the first and second directional derivatives of a function $u$ in the direction $\w$
(with $||\w|| = 1$) as
\begin{align*}
  u_\w = \langle \w, D u \rangle,
  \qquad
  u_{\ww} = \w^\trp D^2 u \w.
\end{align*}
where $Du$ and $D^2 u$ are the gradient and Hessian of $u$, respectively.

Define the forward difference in the direction $v$ by
\[
  \mathcal D_{v}u( x) = \frac{u( x + v) -  u( x)}{|v|}
\]
The first order monotone finite difference schemes for $u_\w$ in the directions $t\w$ and $-t\w$ are given by
\begin{align}\label{eq:fd_1st}
  \mathcal D_{t\w}u( x) &= u_\w( x)  + \mathcal O(t) \\
  \nonumber \mathcal D_{-t\w}u( x)   &= u_\w( x) + \mathcal O(t)
\end{align}
The simplest finite difference scheme for $u_\ww$ is the centred finite differences
\begin{align}
  \frac{u( x + t \w) - 2u( x) +  u( x - t \w )}{t^2} \label{eq:fd22}
  &= \frac 1 t\left[  \mathcal D_{t\w}u( x) + \mathcal D_{-t\w}u( x)\right]
  \\&= u_{\ww}( x)  + \mathcal O(t^2) \label{eq:fd}
\end{align}
The generalization to unequally spaced points is clear from \eqref{eq:fd22}
\begin{align}
  \frac{2}{t_p + t_m} \left[
    \mathcal D_{t_p\w}u( x) + \mathcal D_{-t_m\w} u( x)
  \right] &= u_{\ww}( x) +  \mathcal O(t_+).
  \label{eq:fd_uneven}
\end{align}
where $t_+ =\max\{t_p, t_m\}$ (in general, the scheme is first order accurate, unless $t_p = t_m$).

\subsection{Directional finite differences using barycentric coordinates}
Suppose we want to compute $u_\w( x_0)$ using values $u(x_i)$ which
determine a simplex. Using linear interpolation, we can approximate the value of
$u( x+t_p\w)$ on the boundary of the simplex.
A convenient expression for this value is given by  using barycentric coordinates, (see, for example,~\cite[\S 5.4 p.595]{dahlquist2008numerical}), which allows us to generalize~\eqref{eq:fd22}.

Suppose  $\mathcal S_m$ and $\mathcal S_p$ are the vertices of an  $(n-1)$-dimensional simplex.
Suppose further that
\[
  r \leq \norm{ x_0 -  x_i} \leq R, \qquad \text{ for all }  x_i \in \left\{ \mathcal S_m, \mathcal S_p \right\}
\]
Suppose further that
\begin{align*}
  x_p &= 	x_0 + t_p w \text{ is in the simplex determined by } \mathcal S_p
  \\
  x_m &= x_0 - t_m w \text{ is in the simplex determined by } \mathcal S_n
\end{align*}
for $t_m, t_p \in [r,R]$.
Construct the corresponding linear interpolants $L_m$ and $L_p$
\begin{align}
  L_p( x) &= \sum_{i \in \mathcal S_p} \lambda_p^i( x) u( x_i)
  \label{eq:interpp}
  \\  L_m( x) &= \sum_{i \in \mathcal S_m} \lambda_m^i( x) u( x_i). \label{eq:interpm}
\end{align}

Here $\lambda_{p}( x)$ and $\lambda_{m}( x)$ are the barycentric coordinates in
$\mathcal S_p$ and $\mathcal S_m$ respectively.  The barycentric coordinates are easily
constructed. Let $ v^{p}_i =  x_i -  x_0$, $i \in \mathcal S_p$, and
similarly define $ v^m_i$.
By assumption all $ v_i$'s satisfy $r\leq || v_i|| \leq R$.  Let
$V_p$ be the matrix
\begin{equation}
  V_p = \begin{bmatrix}
     v^{p}_1 &   v^{p}_2&  \dots &  v^{p}_n \\
  \end{bmatrix}.
  \label{eq:T}
\end{equation}
Then $ \lambda_{p}$ is given by solving
\begin{equation}
  V_p  \lambda_{p} =  x.
  \label{eq:lambda}
\end{equation}
The barycentric coordinates $\lambda_m$ for $\mathcal S_m$ are defined analogously.
By virtue of convexity, if $ x$ lies in the (relative) interior of a simplex, its
barycentric coordinates are positive and sum to one.

Barycentric coordinates allow us to define the finite difference schemes  for the first and second directional derivatives as follows.
\begin{definition}[First derivative schemes]
  The first derivative scheme takes two forms, respectively upwind and downwind:
  \begin{align}
    \mathcal D_\w u( x_0) &:= \frac{1}{t_p}\left(
    L_p( x_0 + t_p  \w) - u( x_0) \right), \qquad t_p = \frac{1}{ 1^\trp V_p^{-1}\w}
    \label{eq:scheme_1st}
    \\
    \mathcal D_{-\w} u( x_0) &:= \frac{1}{t_m}\left(
    L_p( x_0 - t_m  \w) - u( x_0) \right), \qquad t_m = \frac{-1}{ 1^\trp V_m^{-1}\w}
    \label{eq:scheme_1st_down}
  \end{align}
  \label{def:1st}
\end{definition}
\begin{definition}[Second derivative scheme]
The second derivative scheme is defined as
  \begin{equation}
    \mathcal D_{\ww} u( x_0)= \frac{2\left(\mathcal D_\w u( x_0) + \mathcal
    D_{-\w}u( x_0)\right)}{t_p+t_m}.
    \label{eq:scheme_uneven}
  \end{equation}
  with $t_p$ and $t_m$  given above.
  \label{def:2nd}
\end{definition}

\begin{lemma}[Monotone and stable]
  The finite difference schemes of Definitions \ref{def:1st} and \ref{def:2nd}
  are monotone and stable.
  \label{lem:ms}
\end{lemma}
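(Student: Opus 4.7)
The plan is to write each scheme explicitly as a linear combination $\sum_i a_i u(x_i) + a_0 u(x_0)$ and then verify that the coefficients satisfy the Barles--Souganidis monotonicity sign pattern (non-negative off-diagonal weights, non-positive weight at $x_0$) with zero row sum; monotonicity and stability then follow immediately. For the first derivative, substituting \eqref{eq:interpp} into \eqref{eq:scheme_1st} yields
\begin{equation*}
\mathcal{D}_{\w} u(x_0) \;=\; \frac{1}{t_p}\sum_{i \in \mathcal{S}_p}\lambda_p^{i}(x_0+t_p\w)\,u(x_i) \;-\; \frac{1}{t_p}\,u(x_0).
\end{equation*}
The choice $t_p = 1/(\mathbf{1}^\trp V_p^{-1}\w)$ is exactly the normalization enforcing $\mathbf{1}^\trp\lambda_p = 1$, so combined with the hypothesis $x_0+t_p\w\in\mathcal{S}_p$ it forces $\lambda_p^i\geq 0$. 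Since $r\leq t_p\leq R$, every coefficient $\lambda_p^i/t_p$ is non-negative, the coefficient $-1/t_p$ on $u(x_0)$ is non-positive, and the weights sum to zero. The downwind scheme $\mathcal{D}_{-\w}$ is handled identically.

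The second derivative scheme, by \eqref{eq:scheme_uneven}, is a positive scalar multiple of $\mathcal{D}_{\w}u(x_0)+\mathcal{D}_{-\w}u(x_0)$, so it inherits the correct sign pattern term by term:
\begin{equation*}
\mathcal{D}_{\ww}u(x_0) \;=\; \frac{2}{t_p+t_m}\left[\sum_{i\in\mathcal{S}_p}\frac{\lambda_p^i}{t_p}u(x_i) + \sum_{i\in\mathcal{S}_m}\frac{\lambda_m^i}{t_m}u(x_i) - \Bigl(\frac{1}{t_p}+\frac{1}{t_m}\Bigr)u(x_0)\right],
\end{equation*}
whose row sum vanishes. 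This is the standard monotone form for a second directional derivative, with all coefficients bounded in terms of $r$ since $t_p,t_m\geq r$ and the barycentric coordinates lie in $[0,1]$. Stability then follows cheaply: the vanishing row sum together with the non-positive diagonal entry yields a discrete maximum principle when the building blocks are assembled into a degenerate elliptic operator.

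The only genuinely geometric input is the existence and positivity of $t_p$ and $t_m$, which is exactly what the stencil hypotheses ``$x_0\pm t_{p,m}\w\in\mathcal{S}_{p,m}$'' with $t_{p,m}\in[r,R]$ provide (invertibility of $V_p, V_m$ coming from non-degeneracy of the simplices). Beyond this geometric check the remainder is bookkeeping from the explicit formulas above, and I do not anticipate any serious obstacle.
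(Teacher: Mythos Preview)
Your proposal is correct and follows essentially the same approach as the paper's own proof, just with more detail spelled out. The paper's argument is a two-sentence sketch: it notes that by convexity $0\le\lambda_{p,m}^i\le1$ and $\sum\lambda_p^i=\sum\lambda_m^i=1$, and then simply observes that this corresponds to a monotone discretization in the sense of \cite{ObermanDiffSchemes}; you have unpacked exactly what that citation is doing by writing out the sign pattern and zero row sums explicitly.
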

\begin{proof}
  By convexity, we are guaranteed that $0\leq \lambda_{p,m}^i \leq
  1$. Further, we have that both $\sum \lambda_p^i = \sum \lambda_m^i = 1$. This corresponds to a monotone discretization of the operator \cite{ObermanDiffSchemes}.
\end{proof}

In the application below, we will use long, slender simplices, which are
oriented near the directions $\pm w$, and control the interior and exterior radii, in order to establish the accuracy of the schemes.

\section{The framework}\label{sec:framework}
In this section we introduce a framework for constructing monotone finite difference
operators on a point cloud, in dimensions two or three.  To implement the method, we require finding triangles (in two dimensions) or simplices (in three dimensions) which contain the reference point.   The configuration of these simplices determines the accuracy of the scheme.

\subsection{Notation}
We use the following notation.
\begin{itemize}
  \item $\Omega \subset \R^n$, an open convex bounded domain with Lipshitz boundary
    $\partial \Omega$. We focus on the cases $n=2$ and $n=3$.
  \item $\mathcal G\subset \bar \Omega$ is a point cloud with points $ x_i,
    i=1\dots N$.
  \item If $\mathcal G$ is given as the undirected graph of a
    triangulation, then $A$ is the corresponding adjacency matrix of the graph.
  \item $h = \sup_{ x \in \Omega} \min_{ y \in \mathcal
    G}\norm{ x -  y}$, the spatial resolution of the graph. Every ball
    of radius $h$ in $\bar \Omega$ contains at least one grid point.
  \item $h_B = \sup_{ x \in \partial\Omega} \min_{ y \in \mathcal G \cap
    \partial\Omega} \norm{ x -  y}$ is the spatial resolution of the
    graph on the boundary.
  \item $\delta = \min_{ x \in \mathcal G\cap\Omega  } \min_{ y \in
    \mathcal G \cap \partial\Omega} \norm{ x -  y}$ is minimum
    distance between an interior point and a boundary point.
  \item $\ell$ is the minmum length of all edges in the graph $\mathcal G$.
  \item $d\theta$ is the desired angular resolution. We shall require at least
    $d\theta<\pi$.
  \item $R = C_n h \left( 1 + \cosec(\frac{d\theta}{2}) \right)$ is the maximal search radius, and depends
    only on the angular resolution, the spatial resolution, and a constant $C_n$
    determined by the dimension.
  \item $r = C_n h \left( -1 + \cosec (\frac{d\theta}{2})\right)$ is the minimal
    search radius. We will see that the minimal search radius is necessary to
    guarantee convergence of the schemes. Further, to guarantee the
    convergence of schemes near the boundary, it will be necessary to require
    $\delta \geq r$.
  \item $C_n$ is a constant determined by the dimension. In $\R^2$, $C_2 = 2$;
    in $\R^3$, $C_3 = 1+\frac{2}{\sqrt{3}}$.
\end{itemize}

The construction of the schemes above require the existence of simplices
which intersect the vector $\w$. For accuracy, we further require that angular resolution
of the simplices diameter relative to the point $ x_0$ is less than $d\theta$.
The following three lemmas show that for given angular and spatial resolutions,
such schemes exist.  Refer to Figure~\ref{fig:simplex_exist}.

\begin{figure}[t]
  \centering
  \begin{subfigure}[b]{0.48\textwidth}
    \trimbox{-1cm -.5cm -1cm 0cm}{\begin{tikzpicture}[scale=2]
  \draw (1,1) circle (1);

  \draw[thick,->,>=latex] (0,0) -- (2,2) node [above
  right] {$ w$};

  \draw[thick,<->,color=cbblue] (1.3536,.6464) -- (1.7866,.8964) node [pos=0.5,
  below right,inner sep=0.1] {$h$};

  \draw[thick,<->,color=cbred,inner sep=0.1] (1,1) -- (1.866,1.5) node [pos=0.6,
  below right] {$C_2h$};

  \draw (1.3536,.6464) circle (.5);
  \draw (.6464,1.3536) circle (.5);

  \draw[dashed] (0,2) -- (2,0) node [below right] {$ w^\perp$};

  \draw[fill=cbpurple,cbpurple] (1.25, .25) circle (0.03) node [above right] {$ x_i$};
  \draw[fill=cbpurple,cbpurple] (.35, 1.35) circle (0.03) node [above ] {$ x_j$};
  \draw[cbpurple] (1.25,.25) -- (.35,1.35) node [above right,pos=0.7,inner sep=0.2] {$\mathcal S$};

  \draw[fill=black] (1.3536,.6464) circle (0.01) ;
  \draw[fill=black] (.6464,1.3536) circle (0.01) ;
  \draw[fill=black] (1,1) circle (0.01) ;
\end{tikzpicture}}
    \caption{$C_2 = 2$}
    \label{fig:2dball}
  \end{subfigure}
  \hfill
  \begin{subfigure}[b]{0.48\textwidth}
    \trimbox{-1cm -0.5cm -1cm 0cm}{\begin{tikzpicture}[scale=1.15]
  \draw[fill=cbpurple, cbpurple] (1.3,-0.4) circle (0.045) node [below left] {$P  x_i$};
  \draw[fill=cbpurple, cbpurple] (-0.5,-0.6) circle (0.045) node [left] {$P  x_j$};
  \draw[fill=cbpurple, cbpurple] (-.3,1.6) circle (0.045) node [below left] {$P  x_k$};
  \draw[fill=cbpurple, cbpurple, fill opacity=0.2] (1.3,-0.4) -- (-0.5,-0.6) --
  (-0.3,1.6) node [below, opacity=1,pos=0.5,right, inner sep=5] {$\mathcal S$}-- (1.3,-0.4);

  \draw (1,0) circle (0.866);
  \draw (-0.5,0.866) circle (0.866);
  \draw (-0.5,-0.866) circle (0.866);

  \draw (0,0) circle (1.866);

  \draw[thick,<->,color=cbblue] (1,0) -- (1.75,.433) node [pos=0.5,
  below right,inner sep=0.1] {$h$};
  \draw[thick,<->,color=cbred] (-0.045,0) -- (-1.866,0) node [pos=0.9,
  above right,inner sep=0.5] {$C_3 h$};

  \draw[fill=black] (0,0) circle (0.045) node [right] {$P  w$};

  \draw[fill=black] (1,0) circle (0.016);
  \draw[fill=black] (-0.5,0.866) circle (0.016);
  \draw[fill=black] (-0.5,-0.866) circle (0.016);

\end{tikzpicture}}
    \caption{$C_3 = 1+\frac{2}{\sqrt{3}}$}
    \label{fig:3dball}
  \end{subfigure}
  \caption{There exists an $n-1$ simplex $\mathcal S$ enclosing $\w$,
    contained within ball of radius $C_n h$.
    In Fig \ref{fig:3dball}, projections onto a plane perpendicular to
  $\w$ are shown.}
  \label{fig:simplex_exist}
\end{figure}
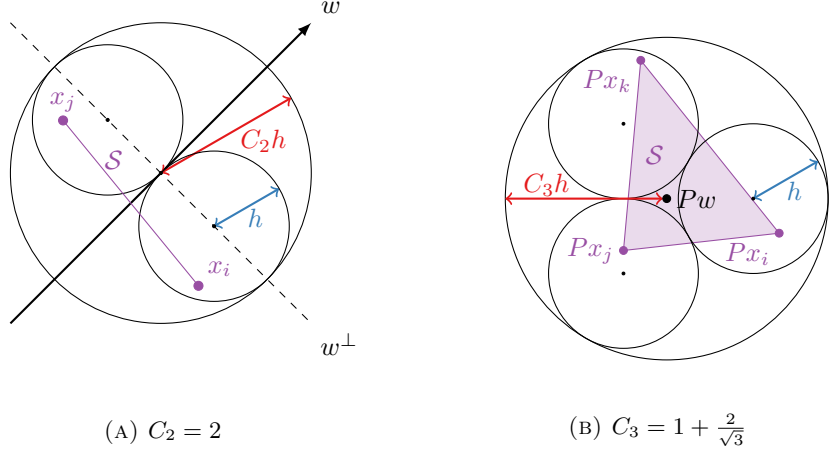

\begin{lemma}[Existence of scheme away from boundary]
  Take $ x_0 \in \mathcal G$ with $\dist( x_0,\partial\Omega) \geq R$.
  Then it is possible to construct the simplices used in Definition~\ref{def:1st}.
  \label{lem:ex_int}
\end{lemma}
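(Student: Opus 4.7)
The plan is to construct the simplex $\mathcal S_p$ explicitly by first placing an auxiliary ball along the ray $\{x_0 + t w : t > 0\}$, then finding $n$ smaller balls inside it whose perpendicular projections onto the hyperplane $w^\perp$ surround the origin. Each small ball is guaranteed by the definition of $h$ to contain a grid point, and these $n$ grid points will form the vertices of $\mathcal S_p$. The construction for $\mathcal S_m$ is identical with $w$ replaced by $-w$, and uses the same hypothesis on $\dist(x_0,\partial\Omega)$.

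First I would place a ball $B$ of radius $C_n h$ whose center lies on the ray from $x_0$ in direction $w$ at distance $C_n h \cosec(d\theta/2)$. The hypothesis $\dist(x_0,\partial\Omega) \geq R = C_n h(1 + \cosec(d\theta/2))$ guarantees that $B \subset \bar\Omega$, so the small balls to be constructed inside $B$ lie in the domain where grid points exist. By straightforward trigonometry, every point $y \in B$ satisfies
\[
  r \,=\, C_n h\!\left(-1 + \cosec\tfrac{d\theta}{2}\right) \,\leq\, \norm{y - x_0} \,\leq\, C_n h\!\left(1 + \cosec\tfrac{d\theta}{2}\right) = R,
\]
and the angle between $y - x_0$ and $w$ is at most $d\theta/2$, so any simplex with vertices in $B$ has angular spread at most $d\theta$ as seen from $x_0$.

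Next I would inscribe $n$ disjoint balls $B_1,\dots,B_n$ of radius $h$ inside $B$, arranged so that their centers project onto $w^\perp$ to the vertices of a regular $(n-1)$-simplex centered at the projection $Pw$ (the axis of $B$). This is exactly the configuration shown in Figure~\ref{fig:simplex_exist}: in $\R^2$, two balls of radius $h$ fit inside a ball of radius $C_2 h = 2h$, and in $\R^3$, three mutually tangent balls of radius $h$ fit inside a ball of radius $C_3 h = (1 + 2/\sqrt 3)h$, which determines the constant $C_n$. Each $B_i$ satisfies $B_i \subset \bar\Omega$ and has radius $h$, so by definition of $h$ each contains at least one grid point; pick $x_i \in B_i \cap \mathcal G$.

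Finally I would verify that the simplex $\mathcal S_p$ with vertices $x_1,\dots,x_n$ is actually crossed by the ray $\{x_0 + tw\}$, and that the crossing occurs at some $t_p \in [r, R]$. Projecting onto $w^\perp$, the projections $Px_1,\dots,Px_n$ lie inside the small balls around the vertices of a regular $(n-1)$-simplex, whose interior contains $Pw = $ projection of the ray; since each $Px_i$ is perturbed by at most $h$ from a configuration whose convex hull strictly contains the origin of $w^\perp$, the convex hull of $\{Px_i\}$ still contains $Pw$, which is equivalent to saying the ray pierces $\mathcal S_p$. The distance from $x_0$ to the pierce point is in $[r, R]$ by the first step. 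The main subtle point is choosing $C_n$ large enough that the perturbation of up to $h$ in each projection does not destroy the containment of $Pw$ in the projected simplex — this is precisely what fixes $C_2 = 2$ and $C_3 = 1 + 2/\sqrt 3$, and is the one step requiring a short geometric computation rather than an abstract argument.
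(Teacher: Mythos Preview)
Your proposal is correct and follows essentially the same approach as the paper: both place a ball of radius $C_n h$ centred at $x_0 + (R - C_n h)\,w = x_0 + C_n h\,\cosec(d\theta/2)\,w$, pack $n$ balls of radius $h$ inside it arranged around the ray, invoke the definition of $h$ to extract one grid point from each, and use trigonometry to conclude that the resulting simplex lies in the cone of half-angle $d\theta/2$ and within the annulus $[r,R]$. You are somewhat more explicit than the paper in isolating the projection-onto-$w^\perp$ step as the place where the specific values $C_2=2$ and $C_3=1+2/\sqrt{3}$ enter, but the argument is otherwise the same.
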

\begin{proof}
  We must show that $\mathcal S_p$ and $\mathcal S_m$ exist. We first show the
  existence of the simplex $\mathcal S_p$; $\mathcal S_m$ follows similarly.
  Define the cone
  \begin{equation}
    K:= \left\{  x \mid   \frac{\langle   v,
      w\rangle}{\norm{ v}} \geq 1 - \cos(\frac{d\theta}{2}),  v =
      x -  x_0
    \right\}.
    \label{eq:K}
  \end{equation}
  Any two points in $K$ have angular resolution (relative to $ x_0$) less than
  $d\theta$. Therefore choosing points in this cone ensures the angular
  resolution is satisfied.

  We must now show that the set $\mathcal G\cap K$ contains points defining
  $\mathcal S_p$.  By construction, any ball in the interior of $\Omega$ with
  radius $h$ contains at least one interior point. Therefore, we may construct a
  simplex intersecting the line $ x_0 + t \w$, $t \in \R$, by placing $n$
  kissing balls on a plane $\w^\perp$ perpendicular to $\w$, and choosing a
  point from within each ball. Using simple geometrical arguments (cf
  Apollonius' problem), it can be shown that these $n$ balls of radius $h$ are
  all contained within a larger ball of radius $C_n h$ (with $C_2 = 2$ and $C_3
  = 1+\frac{2}{\sqrt{3}}$). Refer to Figure \ref{fig:simplex_exist}. Thus, a
  candidate simplex is guaranteed to exist within every ball of radius $C_n h$
  with center on the line $ x_0 + t \w$.

  Let this larger ball be $\bar B( x_0 + (R-C_n h) \w, C_n h)$. See
  Figure \ref{fig:interior_search}.
  Simple trigonometric arguments show that this ball is contained within the
  cone $K$. Therefore the cone $K$ contains the desired simplex
  $\mathcal S_p$.

  Similar reasoning gives the existence of $\mathcal S_m$. Taken together, this
  allows for the construction of the schemes.
\end{proof}

\begin{figure}[t]
  \centering
  \begin{subfigure}[b]{0.7\textwidth}
    \trimbox{-1cm -.5cm -1cm 0cm}{\begin{tikzpicture}[scale=2]
  \draw (1.5,1.5) circle (1);

  \draw[thick,<->,inner sep=0.1,cbred] (1.5,1.5) -- (2.21,2.21) node [pos=0.5,
  below right] {$C_nh$};
  \draw[thick,->,>=latex] (2.21,2.21) -- +(.5,.5) node [above right] {$ w$};

  \draw[cbpurple] (2.05,1) -- (.85,1.85) node [below left,pos=0.2,inner sep=0.2] {$\mathcal S$};
  \draw[fill=cbpurple,cbpurple] (2.05,1) circle (0.03) node [above right] {$ x_i$};
  \draw[fill=cbpurple,cbpurple] (.85, 1.85) circle (0.03) node [above right] {$ x_j$};

  \draw[fill=black] (0,0) circle (0.03) node [below right] {$ x_0$};

  \draw[dashed] (0,0) -- (16.8745:3.1213) node [below right,pos=0.5] {$R$};
  \draw[dashed] (0,0) -- (73.1255:3.1213) ;
  \draw[dashed] (16.8745:3.1213) arc (16.8745:73.1255:3.1213) node [above right]
  {$K\cap \bar B( x_0,R)$};
  \draw (0.5431,1.7903) -- (1.5,1.5) -- ++(225:0.4714)  -- (0.5431,1.7903) node
  [below,pos=0.6] {$b$};

  \draw[thick,<->,cbblue] (1.5,1.5) -- +(225:0.4714) node [pos=0.5,below right]
  {$a$} ;
  \draw[thick,<->,cbgreen] (0.016,0.016) -- (45:1.6499) node [pos=0.5,below right]
  {$c$};

  \draw (0.2,0.2) arc (45:73.1255:0.2828) node [above right,pos=0.85]
  {$\frac{d\theta}{2}$};

  \draw (0.5431+0.2,1.7903-0.2) arc (-45:-16.8745:0.2828) node [below right,pos=0.85]
  {$\frac{d\theta}{2}$};

  \node[align=left] at (3.75,2.15) {$\openup -1 \jot
    \begin{aligned}a &= C_n h \sin(d\theta/2)\\
      b &= C_n h \cos(d\theta/2)\\
      c &=b \cot(d\theta/2) \\
      R &= C_n h + a + c\\
      &= C_n h( 1+ \mathrm{cosec}(d\theta/2))
    \end{aligned}$};
\end{tikzpicture}}
    \caption{Interior simplex}
    \label{fig:interior_search}
  \end{subfigure}
  \hfill
  \begin{subfigure}[b]{0.28\textwidth}
    \trimbox{-1cm -.5cm -1cm 0cm}{\begin{tikzpicture}[scale=2]
  \draw[fill=black] (0,0) circle (0.03) node [below right] {$ x_0$};
  \draw (45:2.5) arc (45:75:2.5) node [above right] {$\partial \Omega$}; 
  \draw[cbpurple, fill=cbpurple] (50:2.5) circle (0.03) node [below left] {$ x_i$};
  \draw[cbpurple, fill=cbpurple] (66:2.5) circle (0.03) node [below left] {$ x_k$};
  \draw[cbpurple] (50:2.5) -- (66:2.5) node [pos=0.4, below left,inner sep=0.2] {$\mathcal S$};
  \draw (0,0) -- (45:1.8);
  \draw (75:1.8) -- (0,0);
  \draw[thick,<->] (75:1.8) -- (45:1.8) node [below right,inner sep=0.15] {$2\delta \tan
    (\frac{d\theta}{2})$};
    \draw[thick,<->] (60:0.04) -- (60:1.74) node [above left,pos=0.6] {$\delta$};
  \draw[dashed] (75:1.8) -- (75:2.5);
  \draw[dashed] (45:1.8) -- (45:2.5);
  \draw[->, >=latex] (60:1.74) -- (60:3) node [above right] {$ w$};
  \draw (60:0.3) arc (60:75:0.3) node [left] {$\frac{d\theta}{2}$};

\end{tikzpicture}}
    \caption{Boundary simplex}
    \label{fig:boundary_search}
  \end{subfigure}

\end{figure}
\begin{lemma}[Existence of interior scheme near boundary]
  Take $ x_0 \in \mathcal G \cap \Omega$ with $ \dist( x_0,\partial\Omega) < R $.
  If the spatial resolution of $\mathcal G$ on the boundary is such that
  $C_n h_B \leq \delta \tan(\frac{d\theta}{2})$ and the angular resolution is
  small enough (dependent on the regularity of the boundary) then the schemes
  given by Definition~\ref{def:1st} exists.
  \label{lem:ex_bdry}
\end{lemma}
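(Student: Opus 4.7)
The plan is to mirror the argument of Lemma \ref{lem:ex_int}, but to allow the simplex $\mathcal S_p$ (or $\mathcal S_m$) to be built out of boundary graph points when the ray $\{ x_0 + tw : t \geq 0\}$ leaves $\Omega$ at distance less than $R$. First I would handle whichever of the two directions has at least $R$ of interior clearance from $ x_0$: for that direction, Lemma \ref{lem:ex_int} applies verbatim. It then suffices to construct the remaining simplex---say $\mathcal S_p$---in the complementary case, when the ray exits $\Omega$ at some point $y$ with $\tau := \norm{y -  x_0} < R$. By the definition of $\delta$, together with the fact that every point of $\partial\Omega$ lies within $h_B$ of a boundary graph point, one has $\tau \geq \delta - h_B$, which is effectively $\tau \geq \delta$ once $h_B$ is small.

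Next I would locally replace $\partial\Omega$ near $y$ by its tangent hyperplane $P$. Because $d\theta$ is assumed small enough for the regularity of the boundary, the patch $\partial\Omega \cap B(y, 2 C_n h_B)$ lies within an $o(h_B)$-neighborhood of $P$. After this flattening, the cone $K$ from \eqref{eq:K} cuts $P$ in a region containing a disk of radius at least $\tau \tan(d\theta/2) \geq \delta \tan(d\theta/2) \geq C_n h_B$, where the last inequality is exactly the standing hypothesis of the lemma. This disk has precisely the size needed to run the packing argument of Lemma \ref{lem:ex_int} with $h$ replaced by $h_B$: pack $n$ kissing balls of radius $h_B$ inside it, and select one boundary graph point from each. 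The resulting simplex $\mathcal S_p \subset \partial\Omega$ has relative interior meeting the ray $\{ x_0 + t_p w\}$ for some $t_p \in [r, R]$, which is what Definition \ref{def:1st} requires. The simplex $\mathcal S_m$ is either produced already by the first step, or constructed in the same way on the opposite side.

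The step I expect to be the real obstacle is the flattening---this is where the qualifier ``$d\theta$ small enough, dependent on the regularity of the boundary'' actually bites. Quantitatively, one needs the oscillation of the outward normal to $\partial\Omega$ over the patch $\partial\Omega \cap B(y, 2 C_n h_B)$ to be a small fraction of $d\theta/2$, so that the inscribed planar disk in $P$ genuinely corresponds to a region of boundary graph points and not merely to a region of the tangent plane. For $C^{1,\alpha}$ boundaries this is automatic once $h_B$ is sufficiently small; for merely Lipschitz boundaries it requires a supplementary local hypothesis such as a uniform cone condition at $y$.
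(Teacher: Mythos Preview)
Your proposal is correct and follows essentially the same route as the paper: split each direction according to whether the cone $K$ (intersected with the $R$-ball) stays inside $\Omega$---in which case the interior packing argument of Lemma~\ref{lem:ex_int} furnishes the simplex---or meets $\partial\Omega$, in which case the hypothesis $C_n h_B \le \delta\tan(d\theta/2)$ guarantees the boundary patch $\partial\Omega\cap K$ is wide enough to host $n$ boundary graph points forming $\mathcal S_p$. The paper's proof is terser (it simply asserts the diameter bound and invokes ``similar geometrical reasoning'' with reference to Figure~\ref{fig:boundary_search}); your explicit flattening to a tangent hyperplane and your discussion of the $C^{1,\alpha}$ versus Lipschitz regularity needed for that step fill in exactly the content the paper hides behind the phrase ``angular resolution small enough (dependent on the regularity of the boundary).''
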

\begin{proof}
  We first will show $S_p$ exists; the existence of $\mathcal S_m$ follows
  analogously. With the cone $K$ defined as in the previous lemma, we must show that
  $\mathcal G \cap K$ contains points defining $\mathcal S_p$.

  Suppose first that $\bar B( x_0, R) \cap K \subset \Omega$. Then the
  existence of $\mathcal S_p$ follows from Lemma \ref{lem:ex_int}.

  Suppose instead that $\bar B( x_0, R) \cap K$ is not entirely contained
  within $\Omega$. If $d\theta$ is small enough, then the boundary is contained
  within $\bar B( x_0, R) \cap K$,
  \begin{align}
    \norm{ x_0 -  y} < R\text{ if $ y\in \partial\Omega\cap K$}.
  \end{align}
  By construction, $\dist( x_0, \partial\Omega) \geq \delta$. Therefore the
  diameter of this portion of the boundary is at least $\delta \tan
  (\frac{d\theta}{2}) \geq  C_n h$. Using similar geometrical reasoning as in the
  previous lemma (see Figure \ref{fig:boundary_search}), there must be $n$
  points on the boundary defining the simplex $\mathcal S_p$.
\end{proof}

The previous two lemmas guarantee the first and second derivative schemes exist
on the interior of the domain. The existence of the first derivative scheme on
the boundary is, in general, not a simple exercise: existence depends on the
regularity of the domain, the angle formed by $\w$ and the boundary normal $
n$, $h$, $h_B$, and $\delta$. For our purposes, we guarantee the existence of a
scheme for the normal derivative with the following lemma.
\begin{lemma}[Existence of normal derivative scheme on the boundary]
  Define the set $\Omega_\delta:=\left\{  x \in \Omega \mid \dist( x,\partial
  \Omega) \geq \delta \right\}$. Suppose $\Omega_\delta$ is such that for every
  $ x\in\Omega_\delta$, $ x \in \bar B(\y,C_d h) \subset \Omega_\delta$ for some
  $\y \in \Omega_\delta$. Suppose
  further that minimum distance between interior points and boundary points is
  less than the minimum search radius, $\delta\leq r$. Then the scheme
  $\mathcal D_{ n} u( x_0)$ for the inward pointing normal derivative
  exists for all boundary points.
  \label{lem:ex_normal}
\end{lemma}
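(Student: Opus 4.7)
The plan is to mimic the interior construction of Lemma \ref{lem:ex_int}, replacing the hypothesis $\dist(x_0,\partial\Omega)\geq R$ (which fails at a boundary point) with the thickness assumption on $\Omega_\delta$. Only the upwind simplex $\mathcal S_p$ is needed, since the inward normal derivative is a one-sided finite difference; no analogue of $\mathcal S_m$ has to be produced.

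First, I would set up the cone of allowed angular directions: define $K$ as in \eqref{eq:K} but with $\w$ replaced by the inward unit normal $n$ at $x_0$. The goal is to produce $n$ grid points in $\mathcal G\cap\Omega\cap K\cap \bar B(x_0,R)$ whose convex hull forms a simplex transverse to the ray $\{x_0 + tn : t > 0\}$. For the interior case, Lemma \ref{lem:ex_int} extracted this simplex from the ball $\bar B(x_0+(R-C_n h)n,\,C_n h)$, which is contained in $K$ by the trigonometry of Figure \ref{fig:interior_search}. Here we must find a substitute ball that is still inside the cone but sits safely inside $\Omega$.

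This is where the thickness hypothesis enters. Consider the point $z = x_0 + t^\star n$ for some $t^\star \in [r+C_n h,\,R-C_n h]$; since $\delta \leq r$ and $t^\star \geq \delta$, enough regularity at $x_0 \in \partial\Omega$ (guaranteed by the smallness of $d\theta$ and the hypothesis on $\Omega_\delta$) gives $\dist(z,\partial\Omega)\geq\delta$, so $z \in \Omega_\delta$. The hypothesis on $\Omega_\delta$ then produces a ball $\bar B(y,C_n h)\subset\Omega_\delta\subset\Omega$ containing $z$. This ball lies in $\Omega$ (so every kissing sub-ball of radius $h$ inside it meets $\mathcal G$) and, since $y$ is within $C_n h$ of the axis $x_0 + tn$, it remains inside $K$ by the same trigonometric estimate used in Lemma \ref{lem:ex_int}. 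Packing $n$ kissing $h$-balls inside $\bar B(y,C_n h)$ as in Figure \ref{fig:simplex_exist} and picking one grid point from each yields vertices at pairwise distances controlled by $C_n h$, forming a simplex $\mathcal S_p$ that is pierced by the ray $x_0+tn$ at some $t\in[r,R]$. This supplies the barycentric construction required by Definition \ref{def:1st} for the inward normal direction.

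The main obstacle is the transfer step from the interior hypothesis to a well-placed ball near the boundary point $x_0$. One must ensure that the thickness hypothesis delivers a ball $\bar B(y,C_n h)\subset \Omega_\delta$ whose center $y$ sits within $C_n h$ of the normal ray, so that the simplex both lies inside the angular cone $K$ and is actually crossed by the ray. The condition $\delta\leq r$ is exactly what allows $t^\star$ to be chosen in the admissible window $[r+C_n h,R-C_n h]$ while still having $z\in\Omega_\delta$, and the qualifier on $d\theta$ (boundary regularity) prevents $\partial\Omega$ from cutting inside the ball near $x_0$. Once the ball is positioned, the rest of the argument is a verbatim repetition of the geometric counting in Lemma \ref{lem:ex_int}.
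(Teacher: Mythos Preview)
Your proposal follows the paper's strategy: reduce to the situation of Lemma~\ref{lem:ex_int} by arguing that the search ball in which the kissing $h$-balls are packed lies entirely inside $\Omega$, then quote the interior construction verbatim. The paper's own proof is extremely terse: it simply places the ball on the normal axis at $x_0+(R-C_nh)n$, asserts (from $\delta<r$) that this ball sits inside $\Omega$, and invokes Lemma~\ref{lem:ex_int}. In particular, the paper does \emph{not} explicitly call on the thickness hypothesis on $\Omega_\delta$ at all; it keeps the ball centered exactly on the axis so that the cone containment from Figure~\ref{fig:interior_search} carries over without modification.

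Your route differs in that you first pick an axial point $z\in\Omega_\delta$ and then invoke the thickness hypothesis to produce $\bar B(y,C_nh)\subset\Omega_\delta$ containing $z$. This is more faithful to the stated hypotheses, but it introduces a minor wrinkle you should be careful about: the center $y$ you obtain may be displaced from the axis by up to $C_nh$, so the kissing-ball pattern of Figure~\ref{fig:simplex_exist}---which is arranged symmetrically about the axis---need not fit inside $\bar B(y,C_nh)$, and the resulting simplex need not be pierced by the ray $x_0+tn$. The paper sidesteps this by keeping the ball on-axis and absorbing the ``$\Omega$-containment'' into the single assertion that $\delta\le r$ pushes the ball past the boundary layer. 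If you want to retain your use of the thickness hypothesis, the cleanest fix is to apply it not to an arbitrary ball through $z$ but to verify that the \emph{on-axis} ball $\bar B\big(x_0+(R-C_nh)n,\,C_nh\big)$ itself lies in $\Omega$; then your argument collapses to the paper's.
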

\begin{proof}
  Let $ x_0$ be a boundary point. If $\delta< r$ then the search ball $\bar
  B( x_0 + (R-C_n h) n, R)$ is contained entirely within $\Omega$. Thus, by the
  same arguments as in the proof of Lemma \ref{lem:ex_int}, the simplex $\mathcal
  S_p$ exists and has angular resolution less than $d\theta$. This allows for the construction of $\eqref{eq:scheme_1st}$ for
  the normal derivative.
\end{proof}

Combining these three lemmas guarantees existence of the schemes.
\begin{theorem}[Existence of schemes]
  Suppose $\mathcal G$ is a point cloud in $\Omega$ with boundary resolution $C_b h \leq
  \delta \tan (\frac{d\theta}{2})$. With small enough $d\theta$, the
  first and second derivative schemes defined respectively by
  Definitions \ref{def:1st} and \ref{def:2nd} exist for all interior
  points $ x_0 \in \mathcal
  G \cap \Omega$. If in addition every $ x \in \Omega_\delta$ lies within a ball $\bar
  B_{C_d h} \subset \Omega_\delta$ and $\delta < r$, the scheme $\mathcal
  D_{ n} u(x_0)$ for the inward normal
  derivative exists for all boundary points $ x_0 \in\mathcal G\cap
  \partial\Omega$.
\end{theorem}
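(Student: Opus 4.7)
The theorem is essentially a bookkeeping combination of the three preceding lemmas, so my plan is to partition the grid points into three regions and invoke the appropriate lemma in each region, then verify that the hypotheses of the theorem imply the hypotheses of each lemma.

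First I would handle interior points $x_0 \in \mathcal{G}\cap\Omega$ with $\dist(x_0,\partial\Omega)\geq R$. Here Lemma~\ref{lem:ex_int} applies directly and with no additional assumption: the cone $K$ of~\eqref{eq:K} and the inscribed ball $\bar B(x_0+(R-C_nh)\w,\,C_n h)$ lie entirely within $\Omega$, so the simplices $\mathcal S_p$ and $\mathcal S_m$ (in both the $+\w$ and $-\w$ directions) exist, giving both the first-derivative scheme of Definition~\ref{def:1st} and consequently the second-derivative scheme of Definition~\ref{def:2nd}.

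Next I would handle interior points with $\dist(x_0,\partial\Omega)<R$. These are precisely the points covered by Lemma~\ref{lem:ex_bdry}. I would verify that its two hypotheses hold: the boundary-resolution condition $C_n h_B\leq \delta\tan(d\theta/2)$ is assumed in the theorem (with $C_b=C_n$), and the theorem's "small enough $d\theta$" assumption is inherited from the lemma. Lemma~\ref{lem:ex_bdry} then produces $\mathcal S_p$ (and analogously $\mathcal S_m$) either from interior points, when the relevant search region lies in $\Omega$, or from boundary points otherwise; in either case Definitions~\ref{def:1st} and~\ref{def:2nd} apply.

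Finally I would handle boundary points $x_0\in\mathcal{G}\cap\partial\Omega$ for the inward normal derivative. Here I would quote Lemma~\ref{lem:ex_normal}: its hypotheses are exactly the additional assumptions listed in the second sentence of the theorem (every $x\in\Omega_\delta$ lies in a ball $\bar B_{C_d h}\subset\Omega_\delta$, and $\delta<r$), so the scheme $\mathcal D_{n}u(x_0)$ in~\eqref{eq:scheme_1st} for the inward normal exists at every such $x_0$. Combining the three cases gives the conclusion.

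The step most likely to need care is the middle one: making the phrase "small enough $d\theta$" quantitative enough to match what Lemma~\ref{lem:ex_bdry} requires while also being compatible with Lemma~\ref{lem:ex_int}, since both phrase their angular smallness slightly differently and the boundary-resolution hypothesis couples $d\theta$ to $\delta$ and $h_B$. Otherwise the proof is essentially a one-line invocation of each lemma, and I would keep it short, emphasizing only how the theorem's hypotheses feed into each lemma's hypotheses.
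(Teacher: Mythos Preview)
Your proposal is correct and matches the paper's approach exactly: the paper gives no formal proof of the theorem at all, but simply precedes the statement with the sentence ``Combining these three lemmas guarantees existence of the schemes,'' which is precisely the three-region case split (Lemma~\ref{lem:ex_int} for interior points with $\dist(x_0,\partial\Omega)\geq R$, Lemma~\ref{lem:ex_bdry} for interior points closer to the boundary, and Lemma~\ref{lem:ex_normal} for boundary points) that you describe. Your plan to verify explicitly that the theorem's hypotheses feed into each lemma's hypotheses, and your flag about making ``small enough $d\theta$'' quantitative, are in fact more careful than what the paper itself does.
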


\subsection{Consistency \& Accuracy}

We now derive bounds on the error of the schemes, and show that the schemes are
consistent with an appropriate choice of $d\theta$ in terms of $h$.  First,
recall the fact that the first term for the error of a linear interpolant is
given by
\begin{equation}
  u( x) - L( x) \approx \frac{1}{2}\sum \lambda_j( x) ( x -
  x_j)^\trp D^2 u( x_j) ( x -  x_j).
  \label{eq:err1}
\end{equation}
Therefore the interpolation error at $ x_0 + t_p \w$ is
\begin{align}
  E[L_p] :=& u( x_0 + t_p\w) - L_p( x_0 + t_p\w) \\
  \approx & \frac{1}{2} \sum_{i \in \mathcal S_p} \lambda^i_p \left(  v^p_i
    - t_p
  \w \right)^\trp D^2u( x_i) \left(  v^p_i - t_p \w \right) \\
  \leq & \frac{1}{2}||D^2 u||_\infty \sum_{i \in \mathcal S_p} \lambda^i_p||
   v^p_i - t_p
  \w||^2.
  \label{eq:err2}
\end{align}
The interpolation error at $ x_0 - t_m \w$ is bounded above in a similar
fashion.

\begin{lemma}[Consistency of first derivative scheme]
  The first derivative schemes of Definition \ref{def:1st} are consistent with a formal
  discretization error of $\mathcal O(h)$.
  \label{lem:1st_cons}
\end{lemma}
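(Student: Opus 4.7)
The plan is to split the scheme into a standard forward difference plus an interpolation correction, and estimate each piece.

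First I would rewrite the scheme of Definition~\ref{def:1st} in the form
\begin{equation*}
  \mathcal D_\w u(x_0) \;=\; \frac{u(x_0 + t_p \w) - u(x_0)}{t_p} \;-\; \frac{E[L_p]}{t_p},
\end{equation*}
by using $L_p(x_0 + t_p \w) = u(x_0 + t_p \w) - E[L_p]$, where $E[L_p]$ is the interpolation error at $x_0 + t_p\w$ analyzed in \eqref{eq:err1}-\eqref{eq:err2}. A Taylor expansion of $u$ about $x_0$ along the direction $\w$ shows that the first term equals $u_\w(x_0) + \mathcal O(t_p)$. Since $t_p \le R = C_n h(1 + \cosec(d\theta/2)) = \mathcal O(h)$ for a fixed angular resolution $d\theta$, this contributes an error of order $\mathcal O(h)$.

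Next I would bound the second term. From \eqref{eq:err2},
\begin{equation*}
  |E[L_p]| \;\le\; \tfrac{1}{2}\|D^2 u\|_\infty \sum_{i \in \mathcal S_p} \lambda_p^i \, \|v_i^p - t_p\w\|^2.
\end{equation*}
The vectors $v_i^p - t_p\w$ connect the interpolation point $x_0 + t_p\w$ (which lies inside $\mathcal S_p$ by construction of $t_p$) to the vertices of $\mathcal S_p$, so each is bounded by the diameter of $\mathcal S_p$. By Lemma~\ref{lem:ex_int} the simplex $\mathcal S_p$ is contained in a ball of radius $C_n h$, so its diameter is at most $2 C_n h$, yielding $|E[L_p]| = \mathcal O(h^2)$. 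Dividing by $t_p \ge r = C_n h(-1 + \cosec(d\theta/2))$, which is bounded below by a positive multiple of $h$ for fixed $d\theta$, gives $|E[L_p]/t_p| = \mathcal O(h)$.

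Combining these two bounds yields $\mathcal D_\w u(x_0) = u_\w(x_0) + \mathcal O(h)$, and the downwind scheme \eqref{eq:scheme_1st_down} is handled identically by replacing $t_p, V_p, \mathcal S_p$ with $t_m, V_m, \mathcal S_m$. The main conceptual step, and the only place geometry enters, is the observation that although $t_p$ may be as large as $R$, the simplex itself has diameter only $\mathcal O(h)$ (independent of how far from $x_0$ it sits), so that the interpolation error is $\mathcal O(h^2)$ rather than $\mathcal O(R^2)$; this is precisely what allows the division by $t_p \gtrsim h$ to return an $\mathcal O(h)$ estimate rather than something worse. The only subtlety to verify is that $t_p$, defined algebraically by $t_p = 1/(1^\trp V_p^{-1} \w)$, coincides with the geometric parameter in $[r,R]$ guaranteed by the existence proof, which follows from the fact that $x_0 + t_p \w$ is the unique intersection of the ray $x_0 + t\w$ with the affine hull of $\mathcal S_p$ and lies in its interior.
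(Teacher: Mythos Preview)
Your proposal is correct and follows essentially the same route as the paper: bound the interpolation error $E[L_p]$ by $\tfrac12\|D^2u\|_\infty$ times the square of the simplex diameter $2C_nh$, divide by $t_p\ge r$, and fix $d\theta$ so that $r$ is a positive multiple of $h$. The only difference is that you explicitly split off and bound the forward--difference contribution $\mathcal O(t_p)=\mathcal O(R)=\mathcal O(h)$, whereas the paper treats only the interpolation (``angular resolution'') term $E[L_p]/t_p$ and records its dependence on $d\theta$ as $\mathcal O(h\,d\theta)$ before fixing $d\theta$; your version is slightly more complete, the paper's slightly more informative about the $d\theta$--dependence, but the arguments are the same.
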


\begin{proof}
  The angular resolution error of the upwind first derivative scheme is
  \begin{align}
    E[\mathcal D_\w u,d\theta] &=  \frac{E[L_p]}{t_p}\\
    &\leq \frac{1}{2}\norm{D^2u}_\infty \sum_{i\in\mathcal S_p}  \lambda_i^p
    \frac{\norm{ v_i^p - t_p \w}^2}{t_p} \\
    &\leq \frac{1}{2}\norm{D^2u}_\infty
    \frac{\max_{i,j\in\mathcal S_p}\norm{ v_i^p  -  v_j^p}^2}{\min_{k\in\mathcal
    S_p} \norm{ v_k}}.\label{eq:ang_1st}
  \end{align}
  By construction, the maximum distance between any two points in a simplex of the
  scheme is $2 C_n h$, and so the numerator here is bounded above by $(2 C_n
  h)^2$. Further, the minimum distance of a vector in the scheme is bounded below
  by the minimum search radius $r$. That is
  \begin{align}
    \min_{k \in \mathcal S_p, \mathcal S_m} \norm{ v_k}
    \geq r &= C_n h \left( -1 + \cosec (\frac{d\theta}{2}) \right)\\
    &=\mathcal O\left(\frac{h}{d\theta}\right).
  \end{align}
  With this in mind, \eqref{eq:ang_1st} is
  bounded by
  \begin{align}
    E[\mathcal D_\w u,d\theta] &\leq \frac{1}{2}\norm{D^2u}_\infty \frac{(2 C_n
    h)^2}{r} \label{eq:acc_1st}\\
    &=\mathcal O(h d\theta)
  \end{align}
  Fixing $d\theta$ constant as $h \rightarrow 0$ gives that the scheme is
  $\mathcal O(h)$.
\end{proof}

\begin{lemma}[Consistency of second derivative schemes]
  Using a non symmetric stencil, with the optimal choice $d\theta =
  \left(\frac{h}{2}\right)^{\frac{1}{3}}$, the second derivative scheme
  $\mathcal D_{\ww} u$ of Definition \ref{def:2nd} is
  consistent, with a formal accuracy of $\mathcal O(h^{\frac{2}{3}})$.
  Moreover on a symmetric stencil, with the optimal choice $d\theta =
  h^{\frac{1}{2}}$, $\mathcal D_{\ww} u$ is consistent, with a formal
  accuracy of $\mathcal O(h)$.
  \label{lem:2nd_cons}
\end{lemma}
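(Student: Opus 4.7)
The plan is to split the error in $\mathcal D_{\ww}u(x_0) - u_{\ww}(x_0)$ into two contributions. Substituting the definition of $\mathcal D_{\w}$ and $\mathcal D_{-\w}$ and writing $L_p(x_0+t_p\w) = u(x_0+t_p\w) - E[L_p]$ (and analogously for $L_m$), I would rewrite the scheme as
\begin{equation*}
\mathcal D_{\ww}u(x_0) = \frac{2}{t_p+t_m}\left[\frac{u(x_0+t_p\w)-u(x_0)}{t_p} + \frac{u(x_0-t_m\w)-u(x_0)}{t_m}\right] - \frac{2}{t_p+t_m}\left[\frac{E[L_p]}{t_p}+\frac{E[L_m]}{t_m}\right].
\end{equation*}
The first bracket is a pure finite-difference operator acting on exact function values along $\pm\w$; the second captures the angular/interpolation error.

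For the finite-difference piece, I would Taylor expand $u(x_0\pm t_{p,m}\w)$ to third order. A direct computation shows the $u(x_0)$ and $u_\w(x_0)$ terms cancel, the $u_{\ww}$ term produces exactly $u_{\ww}(x_0)$, and the next term is $\tfrac{t_p^2-t_m^2}{3(t_p+t_m)}u_{\w\w\w}(x_0) = \tfrac{t_p-t_m}{3}u_{\w\w\w}(x_0)$, with a remainder of size $\mathcal O(t_+^2)$. Hence this piece contributes $\mathcal O(R)$ in the non-symmetric case and, when $t_p=t_m$, collapses to $\mathcal O(R^2)$ (the symmetric case).

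For the angular piece, I would invoke the interpolation bound \eqref{eq:err2}. Since the simplex $\mathcal S_p$ lies within a ball of radius $C_n h$ containing $x_0+t_p\w$ (as established in the proof of Lemma \ref{lem:ex_int}), each $\norm{v_i^p - t_p\w}$ is bounded by $2C_n h$, so $|E[L_p]|,|E[L_m]| = \mathcal O(h^2)$. Using $t_p,t_m\geq r$ together with $t_p+t_m\geq 2r$ gives $2/(t_p+t_m)\leq 1/r$, and the angular error is bounded by $\mathcal O(h^2/r^2)$. Since $r = C_n h(-1+\cosec(d\theta/2)) = \mathcal O(h/d\theta)$, this is $\mathcal O(d\theta^2)$.

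Adding the two pieces yields total error $\mathcal O(R+d\theta^2)$ in the non-symmetric case and $\mathcal O(R^2+d\theta^2)$ in the symmetric case. Substituting $R=\mathcal O(h/d\theta)$, in the first case we minimize $h/d\theta + d\theta^2$, which gives $d\theta=(h/2)^{1/3}$ and formal accuracy $\mathcal O(h^{2/3})$; in the second case we minimize $(h/d\theta)^2+d\theta^2$, which gives $d\theta=h^{1/2}$ and formal accuracy $\mathcal O(h)$. The main, if modest, obstacle is the bookkeeping of the angular piece: one must simultaneously exploit both the prefactor bound $2/(t_p+t_m)\le 1/r$ and the denominators $t_p,t_m\ge r$ to obtain exactly two (not three) powers of $1/r$, since a third power would spoil the $d\theta^2$ scaling and ruin the claimed rates.
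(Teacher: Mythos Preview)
Your proposal is correct and follows essentially the same approach as the paper: both split the error into a spatial (finite-difference) piece and an angular (interpolation) piece, bound the latter by $\mathcal O(h^2/r^2)=\mathcal O(d\theta^2)$ using the simplex diameter $2C_nh$ and the lower bound $t_p,t_m\ge r$, and then optimize $d\theta$ in $h/d\theta+d\theta^2$ (respectively $(h/d\theta)^2+d\theta^2$). The only cosmetic difference is that you re-derive the spatial error via an explicit Taylor expansion, whereas the paper simply invokes the earlier estimates \eqref{eq:fd} and \eqref{eq:fd_uneven}.
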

\begin{proof}

  The angular resolution error of the second derivative scheme is
  \begin{align}
    E[\mathcal D_{\ww} u,d\theta] &= 2\left( \frac{E[L_p]}{t_p^2 + t_p t_m} +
    \frac{E[L_m]}{t_m^2 + t_p t_m} \right ) \label{eq:err_ang1} \\
    &\leq \frac{1}{t_-^2} \Big(E[L_p] + E[L_m]\Big)
  \end{align}
  where $t_- =\min\{t_p, t_m\}$. Arguing in a similar fashion as in the first
  derivative,
  \begin{align}
    E[\mathcal D_{\ww} u,d\theta] &\leq ||D^2 u||_\infty \frac{\max_{S \in \mathcal S_p, \mathcal
    S_m} \max_{i,j \in S} || v_i -  v_j||^2}{ \min_{k \in \mathcal S_p,
    \mathcal S_m} || v_k||^2}
    \label{eq:ang_err}\\
    &\leq ||D^2 u||_\infty \frac{(2 C_n h)^2}{r^2} \\
    & =\mathcal O(d\theta^2).
    \label{eq:ang_err2}
  \end{align}
  since $d\theta = \mathcal O(\frac{h}{r})$ when $d\theta$ is small.

  The total error of the scheme is the sum of angular and spatial resolution
  errors. For the second derivative, in the non symmetric case, the error of the scheme is
  \begin{align}
    E[u_{\ww}] &= \mathcal O(R + d\theta^2)\\
    &= \mathcal O(\frac{h}{d\theta}+ d\theta^2),
  \end{align}
  because $R = \mathcal O(\frac{h}{d\theta})$ when $d\theta$ is small.
  In the symmetric case the error is
  \begin{align}
    E[\mathcal D_{\ww} u] &= \mathcal O(R^2 + d\theta^2) \\
    &= \mathcal O\left( \left(\frac{h}{d\theta} \right)^2 + d\theta^2\right)
  \end{align}
  To ensure the scheme is consistent, $d\theta$ must be chosen in terms of $h$
  such that the error of the scheme goes to zero as the point cloud is refined.
  In the non symmetric case, the best choice  is $d\theta =
  \left(\frac{h}{2}\right)^\frac{1}{3}$, which gives a formal accuracy of
  $\mathcal O(h^\frac{2}{3})$. When the discretization is symmetric, the best
  choice of $d\theta$ is $\sqrt{h}$, and the scheme is formally $\mathcal O(h)$.
\end{proof}

\begin{remark}
  To guarantee the accuracy of the first order scheme, $d\theta$ must remain
  constant as $h \rightarrow 0$. In contrast, for the second order scheme to
  converge as $h \rightarrow 0$, it must be that $d\theta \sim
  \left(\frac{h}{2} \right)^{\frac{1}{3}}$ (on a non symmetric grid). Thus, for the remainder of the
  paper, when we speak of the angular resolution error, we mean the angular
  resolution error for the second derivative scheme. We assume that the angular
  resolution error for the first derivative scheme has been fixed to some
  reasonable constant, say $\frac{\pi}{4}$.
\end{remark}

\begin{remark}
  To ensure the existence of consistent schemes near the boundary, we require
  that the minimal distance between interior and boundary points is greater than
  the minimal search radius, $\delta \geq r$.
\end{remark}

\subsection{Practical considerations}
We now outline a procedure for preprocessing the point cloud $\mathcal G$, which will greatly
speed the construction of elliptic schemes. The algorithm takes a point cloud
$ x_i\in\mathcal G, i\in\mathcal I$ and returns a set $\mathcal L_i$ of candidate simplices for each
point. Each simplex $\mathcal S_k \in \mathcal L_i$, $k=1,\dots,m_i$, is contained within the
annulus formed by the minimum and maximum search radii. Further, projecting
$\mathcal L_i$ onto the sphere forms a covering of the sphere. Thus all possible
directions are available.

The pseudocode of the algorithm is given in Algorithm \ref{alg:pre}.
Note that we assume the set of normalized neighbour points, denoted by $V$, is
unique. If not, for each set of non unique points, keep only the smallest in
norm.

Now suppose the list of simplices \[\mathcal L_i = \left\{ S_k \right\},
\quad k=1,\dots,m_i\]
have been generated for a point $ x_i$. Given a direction $\w$ it
is straight forward to choose $\mathcal S_p$ and $\mathcal S_m$ from $\mathcal
L_i$.  Define
\[V_k = \begin{bmatrix}
     v_1 &   v_2&  \dots &  v_n \\
\end{bmatrix},\quad\text{ with }  v_k =  x_j -  x_i,\, j \in \mathcal S_k.\]
Then by Farkas' lemma,
\[
  \mathcal S_p = \left\{\mathcal S_k\in \mathcal L_i \mid
  V_k^{-1} \w \geq 0\right\}
\]
and
\[
  \mathcal S_m = \left\{\mathcal S_k\in \mathcal L_i \mid
  V_k^{-1} \w \leq 0\right\}.
\]
If these sets are not singletons (when $\w$ aligns with a grid direction), then
choose one representative element.

\begin{algorithm}[t]
  \caption{Algorithm for preprocessing the point cloud}
  \label{alg:pre}
  \SetKwInOut{Input}{Input}
  \SetKwInOut{Output}{Output}

  \Input{A point cloud $ x_i\in\mathcal G$ in $\R^n$, $i\in \mathcal I$, and
  resolution error $d\theta$}
  \Output{A list of sets of simplices $\mathcal L_i$,
    $i\in\mathcal I$, where $\mathcal L_i=\left\{\mathcal S_1, \dots, \mathcal
  S_{m_i}  \right\}$}

  $\mathcal T \gets \text{triangulation}(\mathcal G)$ \tcp*{triangulation of
  $\mathcal G$}

  $A \gets \text{adj}(\mathcal T)$ \tcp*{Adjacency matrix of $\mathcal T$}

  $\ell \gets \text{ minimum length of all edges in } \mathcal T$ \;

  $h \gets \sup_{ x \in \Omega} \min_{\y \in \mathcal G} \norm{ x-\y}$
  \tcp*{spatial resolution of point cloud}

  $R \gets C_n h \left(1 + \cosec\left(
  \frac{d\theta}{2}\right) \right)
  $ \tcp*{maximum search radius}

  $r \gets C_n h \left( -1 + \cosec\left(
  \frac{d\theta}{2}\right) \right)
  $ \tcp*{ minimum search radius}

  $p\gets \left\lceil\frac{R}{\ell}\right\rceil$ \tcp*{maximum neighbour
  graph distance}

  $P\gets\sum_{k=1}^p A^k$ \;

  \ForEach{$i \in \mathcal  I$}{
    $\mathcal N \gets \left\{j \mid P_{ij}\neq 0,\, i\neq j,\, r \leq \norm{
      x_i- x_j}
    \leq R  \right\}$ \tcp*{Neighbour indices}

    $V \gets \left\{ \frac{ x_i -  x_j}{\norm{ x_i -  x_j}} \mid j \in \mathcal
    N \right\}$ \tcp*{assume elements of $V$ are unique}

    $C \gets \text{Convex hull of }V$ \;
    $\mathcal L_i \gets \emptyset$ \;
    \ForEach{Facet $\mathcal F$  of $C$}{
      \tcp{$\mathcal F$ is a set of indices of the points in $V$ }
      $\mathcal S \gets \left\{  x_k \mid k=\mathcal N_j,\, j\in \mathcal F
      \right\}$ \;

    $\mathcal L_i = \mathcal L_i \cup \{\mathcal S\}$ \; }

  }
  \Return{$\left\{ \mathcal L_i \right\},\, i\in \mathcal I$}
\end{algorithm}

\begin{remark}
  The  proofs of Section \ref{sec:framework}  relied on choosing the maximal and minimal search radii to
  respectively be $R,r = C_n h(\pm 1 + \cosec(\frac{d\theta}{2}))$. This choice makes
  the proofs relatively straightforward. However, it is possible to still
  guarantee existence and accuracy of the finite difference scheme with the
  narrower band of search radii $R, r = h(\pm 1 +  C_n \cosec(\frac{d\theta}{2})$.
    In practice this set of search radii limits the appearance of `spikey'
    stencils. We have found that it is best to choose a set set of simplices
    whose boundary has minimal surface area, thus limiting the amount of
    interpolation error.
  \end{remark}

  \section{Application: Eigenvalues of the Hessian}\label{sec:eigs}
  It is relatively straight forward to employ $\mathcal D_{\ww} u$ to find
  maximal and minimal eigenvalues of the Hessian about a point $ x_i\in
  \mathcal G$.  We will illustrate the procedure for the maximal eigenvalue,
  but the procedure is analogous for the minimal eigenvalue.

  Define the finite difference operator $\Lambda_+^{h,d\theta}
  u( x_i):=\sup_{\norm{\w}=1} \mathcal D_{\ww} u( x_i)$ as the approximation
  of the maximum eigenvalue of the Hessian.

  Actually computing $\Lambda_+^{h,d\theta}u( x_i)$ reduces to an optimization
  problem. Define $K(\mathcal S)$ as the cone generated by a set $S$. We say that
  two cones overlap if their intersection is non empty. For each
  pair $\{\mathcal S_p, \mathcal S_m\}$ of overlapping antipodal simplices in
  $\mathcal L_i$ (with $K(\mathcal S_p) \cap K(-\mathcal S_m)\neq \emptyset)$, one
  computes
  \begin{equation*}
    \begin{aligned}
      P[\mathcal S_m, \mathcal S_p] =  &\underset{\lambda_p, \lambda_m}{\text{maximize}}
      & & 2\left[ \frac{\sum_{i \in \mathcal S_p} \lambda^i_p u( x_i) - u(
        x_0)}{t^2_p + t_p t_m} + \frac{\sum_{i \in \mathcal S_m} \lambda^i_m
          u( x_i) - u(
      x_0)}{t^2_m + t_p t_m}\right ] \\
      & \text{subject to} & & 0 \leq  \lambda_p,  \lambda_m \leq  1 \\
      & & & 1^\trp \lambda_p = 1 \\
      & & & 1^\trp \lambda_m = 1 \\
      & & & t_p = ||V_p  \lambda_p || \\
      & & & t_m = ||V_m  \lambda_m ||
    \end{aligned}
  \end{equation*}
  The variables $t_p$ and $t_m$ are dummy variables.  On a two dimensional uniform
  grid, this simplifies to a straightforward optimization problem over one
  variable, which can be solved analytically.

  To find the maximal eigenvalue, one takes the maximal value computed over all
  antipodal pairs:
  \begin{align}
    \Lambda_+^{h,d\theta}u( x_i) = \max_{\substack{\mathcal S_m,\, \mathcal S_p \in
    \mathcal L_i\\K(\mathcal S_m) \cap K(- \mathcal S_p) \neq \emptyset}} P[\mathcal S_m,
    \mathcal S_p] \label{eq:eigopt}
  \end{align}
  The error of the scheme is
  \begin{align}
    E[\Lambda_+^{h,d\theta}] &= \left |\max_{\norm{ v}=1}  v^\trp D^2
    u( x_i)  v -
    \max_{\norm{\w}=1} \mathcal D_{\ww} u( x_i) \right | \\
    &\leq \max_{\norm{\w}=1} w^\trp D^2 u( x_i)  w -
    \mathcal D_{\ww} u( x_i)\\
    &=\mathcal O(R + d\theta^2),
  \end{align}
  on a non symmetric grid. As before, on a symmetric grid the error is $\mathcal
  O(R^2+d\theta^2)$.

  \begin{remark}
    In cases other than on a symmetric grid in two dimensions, the
    optimization problem
    \eqref{eq:eigopt} is difficult to implement. As a compromise, one may
    instead compute finitely many directional derivative $\mathcal D_{ w_i
    w_i} u$, $i=1,\dots,k$. Define the \emph{effective} angular resolution through
    \begin{align}
      \cos d\theta_e = \max_i  \min_{j\neq i} \langle \w_i, \w_j \rangle.
    \end{align}
    Because the directional derivative may be taken off grid, one may choose
    sufficiently many
    directions $\left\{\w_i \right\}$ such that $d\theta_e \leq d\theta^2$. With
    this choice of directional derivatives, the maximal eigenvalue of the Hessian
    can be defined as
    \begin{equation}
      \Lambda_+^{h,d\theta_e} u( x_i) = \max_i \mathcal D_{\w_i \w_i} u(
      x_i).
    \end{equation}
    A simple computation shows that $\Lambda_+^{h,d\theta_e}$ also has accuracy
    $\mathcal O(R + d\theta^2)$.

  \end{remark}

  \section{Solvers}
  Before continuing with specific numerical examples, we first detail the
  numerical solver used. All solutions in Section \ref{sec:ex} were computed with
  a global semi-smooth Newton method. Without modification, the Newton method
  fails, because the Newton method is guaranteed to be only a local method. However,
  the Newton method is achieves supralinear rates of convergence when the starting
  condition is close enough to the true solution.

  Thus to guarantee convergence, we use a global semi-smooth Newton method
  \cite[Chapter~8]{ssnm}. Let $F^h[u]$ be a finite difference approximation of an
  elliptic operator $F[u]$. After each Newton step, we check for a sufficient
  decrease in the energy $\norm{F^h[u]}^2$. If the Newton step does not decrease,
  the method switches to performing Euler steps, which is a guaranteed descent
  direction. We perform Euler steps for the same amount of CPU time
  as one Newton step, which was first proposed in \cite{carrington}.
  Because the Euler step is a guaranteed descent direction, the method is globally
  convergent \cite{ssnm}.

  \section{Numerical Examples}\label{sec:ex}
  Here we test our meshfree finite difference method on two examples. We
  demonstrate the convergence rates of the method, and compare our method with
  that of \cite{froese_meshfree_2017}.

  Our code, written in Python, is publicly available at
  \url{https://github.com/cfinlay/pyellipticfd}.

  \subsection{Convex envelope}
  Our first example is the convex envelope of a function $g( x)$ on a convex domain
  $\Omega$. The convex envelope has been well
  studied. In \cite{oberman_ce_2007} it was shown that the convex envelope solves the partial differential equation
  \begin{equation}
    \begin{cases}\max\{u( x) - g( x), -\Lambda_- u( x)\} = 0 &\qquad
       x \in \Omega\\
      u( x) = g( x) &\qquad  x \in \partial \Omega,
      \label{eq:ce}
    \end{cases}
  \end{equation}
  where $\Lambda_- u( x)$ is the minimal eigenvalue of the Hessian. A stable,
  monotone convergent finite difference scheme for computing the convex envelope
  was presented in \cite{oberman_computing_2008}.

  In what follows, we take $g( x)$ to be the Euclidian distance to two points
  $ p_1$ and $ p_2$,
  \begin{equation}
    g( x) = \min_{i=1,2}\{\| x - p_i\|\},
    \label{eq:g}
  \end{equation}
  or in otherwords, a double cone.

  We start by computing the solution on the square $[-1,1]^2$, with $ p_{1,2}
  = (\pm \frac 3 7, 0)$. We discretize $\Lambda_- u( x)$ using our symmetric
  linear interpolation finite difference scheme for eigenvalues of the Hessian, presented in Section
  \ref{sec:eigs}, and using the wide stencil method developed in
  \cite{oberman_computing_2008}. We call the latter a nearest neighbour scheme.
  For both methods, we solved the equation using stencils with radius two and
  three. Figure \ref{fig:cegrid} and Table \ref{tab:ce} present convergence rates
  in the max norm. We can see that for stencil radius two, angular resolution
  error arises quickly as $h$ is decreased, and the error plateaus. However, with
  stencil radius three, we get a better handle on the convergence rate of the
  error. The standard wide stencil method achieves roughly $\mathcal
  O(h^\frac{2}{3})$, while the symmetric linear interpolation method achieves
  $\mathcal O(h)$, as expected.

  Although the convergence rate of the linear interpolation method is better than
  the nearest neighbour method, for the values of $h$ we studied, the linear
  interpolation method has higher absolute error. This is because in order to
  guarantee convergence, the linear interpolation method must choice points
  greater than the minimum search radius, whereas the standard wide stencil
  finite difference scheme may choose its nearest neighbours. Thus the spatial
  resolution error of the linear interpolation scheme is generally higher than
  the nearest neighbour scheme.

  We are also interested in the error of the schemes as a function of the angular
  resolution. To this end, for fixed $h$, we compare the error of the schemes when
  the grid has been rotated off axis. Our results are presented in Figure
  \ref{fig:ang}.  The mean of
  the error of the linear interpolation scheme is higher than the nearest
  neighbour scheme, due to the fact that the linear interpolation scheme chooses
  points further from the stencil centre. However,
  the variance of the error for the linear interpolation scheme nearest neighbour
  scheme is much less than that of the nearest neighbour scheme. That is, the
  linear interpolation scheme depends less on the angular resolution of the
  stencil relative to the rotation of the grid.

  Finally, we compare the linear interpolation scheme with Froese's scheme on the
  unit disc, using an irregular triangulation of points. We generate the interior points using
  the triangulation software DistMesh \cite{distmesh}, and augment the boundary
  with additional points to ensure a sufficient boundary resolution. Convergence
  rates are presented in Figure \ref{fig:cedisc} and Table \ref{tab:ce}. We can
  see that the linear interpolation scheme achieves both the best rate of
  convergence and a better absolute error.

  \begin{figure}[t]
    \centering
    \begin{subfigure}[b]{0.911\textwidth}
      \includegraphics[width=\textwidth,left]{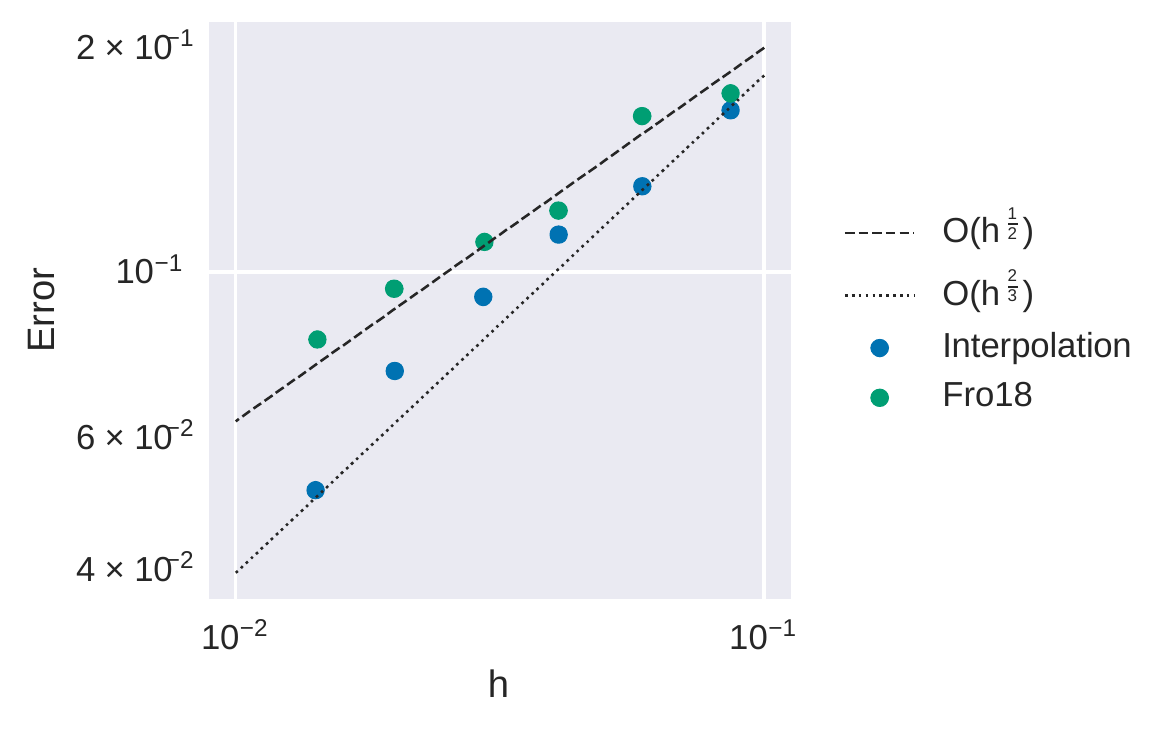}
      \caption{Triangular mesh}
      \label{fig:cedisc}
    \end{subfigure}
    \hfill
    \begin{subfigure}[b]{1\textwidth}
      \includegraphics[width=\textwidth,center]{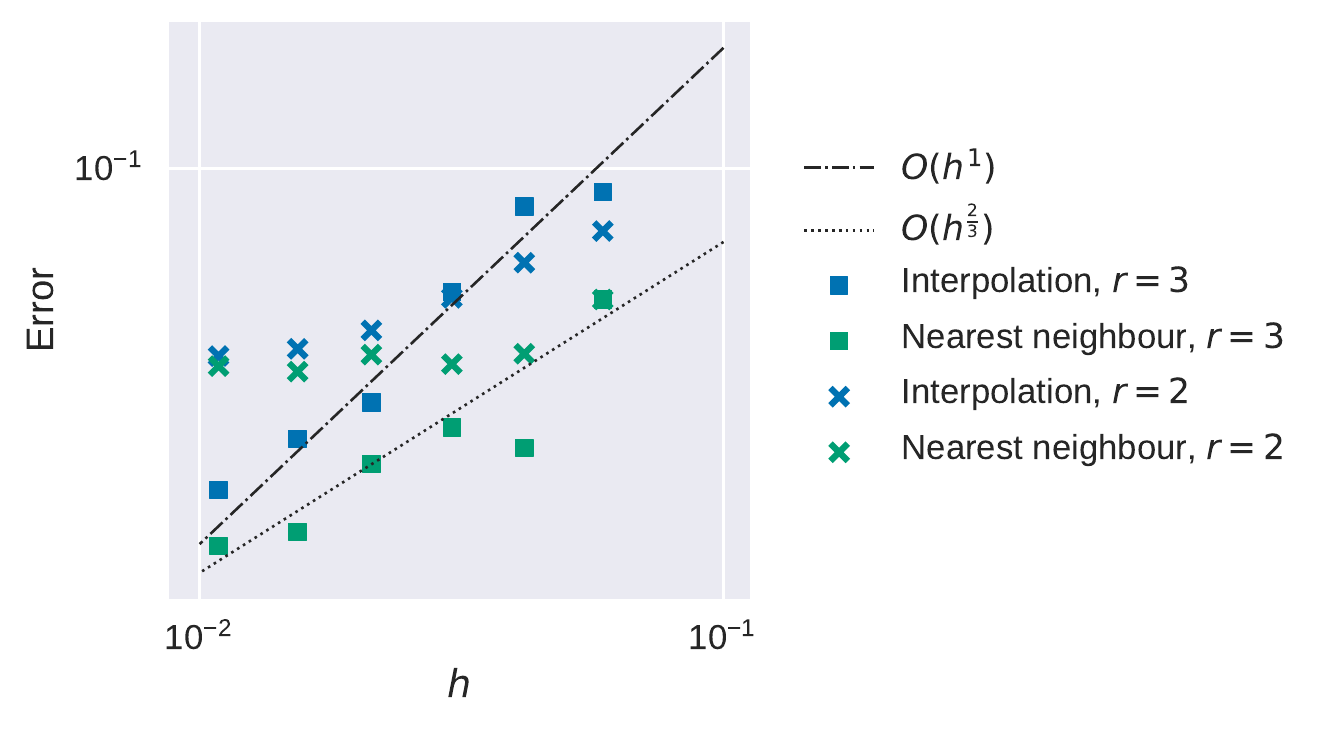}
      \caption{Regular grid}
      \label{fig:cegrid}
    \end{subfigure}
    \caption{Figure \ref{fig:cedisc}: Convergence plot for the convex envelope on
      the unit disc with
      triangular mesh.  Figure \ref{fig:cegrid}: Convergence plot for the convex
      envelope on a regular grid over
    the square $[-1,1]^2$.}
    \label{fig:ce_rates}
  \end{figure}

  \begin{table}
  \centering
  \begin{tabular}{l r l l}
    \hline
    \hline
    \multicolumn{4}{l}{Triangular mesh, interpolation} \\
        $h$    &   $N$    & Error    &  rate \\
    \hline
    \num{8.6e-2} &  427 & 0.16 &     -- \\
    \num{5.9e-2} &  785 & 0.13 & 0.61 \\
    \num{4.0e-2} & 1452 & 0.11 & 0.41 \\
    \num{2.9e-2} & 2713 & 0.09 & 0.58 \\
    \num{2.0e-2} & 5101 & 0.07 & 0.59 \\
    \num{1.4e-2} & 9674 & 0.05 &  1.07 \\
    \hline
    \hline

    \multicolumn{4}{l}{Regular grid, interpolation, $r=2$} \\
        $h$    &   $N$    & Error    &   rate \\
    \hline
    \num{5.9e-2} &  392 & \num{7.5e-2} &      -- \\
    \num{4.1e-2} &  721 & \num{6.5e-2} &  0.43 \\
    \num{3.0e-2} & 1288 & \num{5.5e-2} &  0.51 \\
    \num{2.1e-2} & 2492 & \num{4.7e-2} &  0.43 \\
    \num{1.5e-2} & 4616 & \num{4.3e-2} &  0.26 \\
    \num{1.0e-2} & 9017 & \num{4.2e-2} & 0.09 \\
    \hline
    \hline

    \multicolumn{4}{l}{Regular grid, interpolation, $r=3$} \\
        $h$    &   $N$    & Error    &  rate \\
    \hline
    \num{5.9e-2} &  528 & \num{9.0e-2} &     -- \\
    \num{4.2e-2} &  913 & \num{8.4e-2} & 0.20\\
    \num{3.0e-2} & 1552 & \num{5.6e-2} &  1.24\\
    \num{2.1e-2} & 2868 & \num{3.4e-2} &  1.45\\
    \num{1.5e-2} & 5136 & \num{2.9e-2} & 0.52\\
    \num{1.0e-2} & 9757 & \num{2.2e-2} & 0.68\\
    \hline
  \end{tabular}
  \hspace{1em}
  \begin{tabular}{l r l l}
    \hline
    \hline

    \multicolumn{4}{l}{Triangular mesh, Fro17} \\
        $h$    &    $N$    & Error    &  rate \\
    \hline
    \num{8.6e-2} &   427 & 0.17 &     -- \\
    \num{5.0e-2} &   810 & 0.16 & 0.18 \\
    \num{4.1e-2} &  1533 & 0.12 & 0.80 \\
    \num{3.0e-2} &  2908 & 0.11 & 0.30 \\
    \num{2.0e-2} &  5526 & 0.09 & 0.37 \\
    \num{1.4e-2} & 10542 & 0.08 & 0.47 \\
    \hline
    \hline

    \multicolumn{4}{l}{Regular grid, Nearest neighbour, $r=2$} \\
        $h$    &   $N$    & Error    &   rate \\
    \hline
    \num{5.9e-2} &  392 & \num{5.4e-2} &      -- \\
    \num{4.2e-2} &  721 & \num{4.2e-2} &  0.73 \\
    \num{3.0e-2} & 1288 & \num{4.0e-2} &   0.15 \\
    \num{2.1e-2} & 2492 & \num{4.2e-2} & -0.13 \\
    \num{1.5e-2} & 4616 & \num{3.9e-2} &  0.24 \\
    \num{1.0e-2} & 9017 & \num{4.0e-2} & -0.07 \\
    \hline
    \hline

    \multicolumn{4}{l}{Regular grid, Nearest neighbour, $r=3$} \\
        $h$    &   $N$    & Error    &  rate \\
    \hline
    \num{5.9e-2} &  528 & \num{5.4e-2} &     -- \\
    \num{4.2e-2} &  913 & \num{2.7e-2} &  2.0\\
    \num{3.0e-2} & 1552 & \num{3.0e-2} & -0.29\\
    \num{2.1e-2} & 2868 & \num{2.5e-2} & 0.47\\
    \num{1.5e-2} & 5136 & \num{1.9e-2} & 0.98\\
    \num{1.0e-2} & 9757 & \num{1.7e-2} & 0.18\\
    \hline
  \end{tabular}

  \caption{Errors and convergence order for the convex envelope.}
  \label{tab:ce}
\end{table}

  \begin{figure}[t]
    \centering
    \includegraphics[width=1\textwidth]{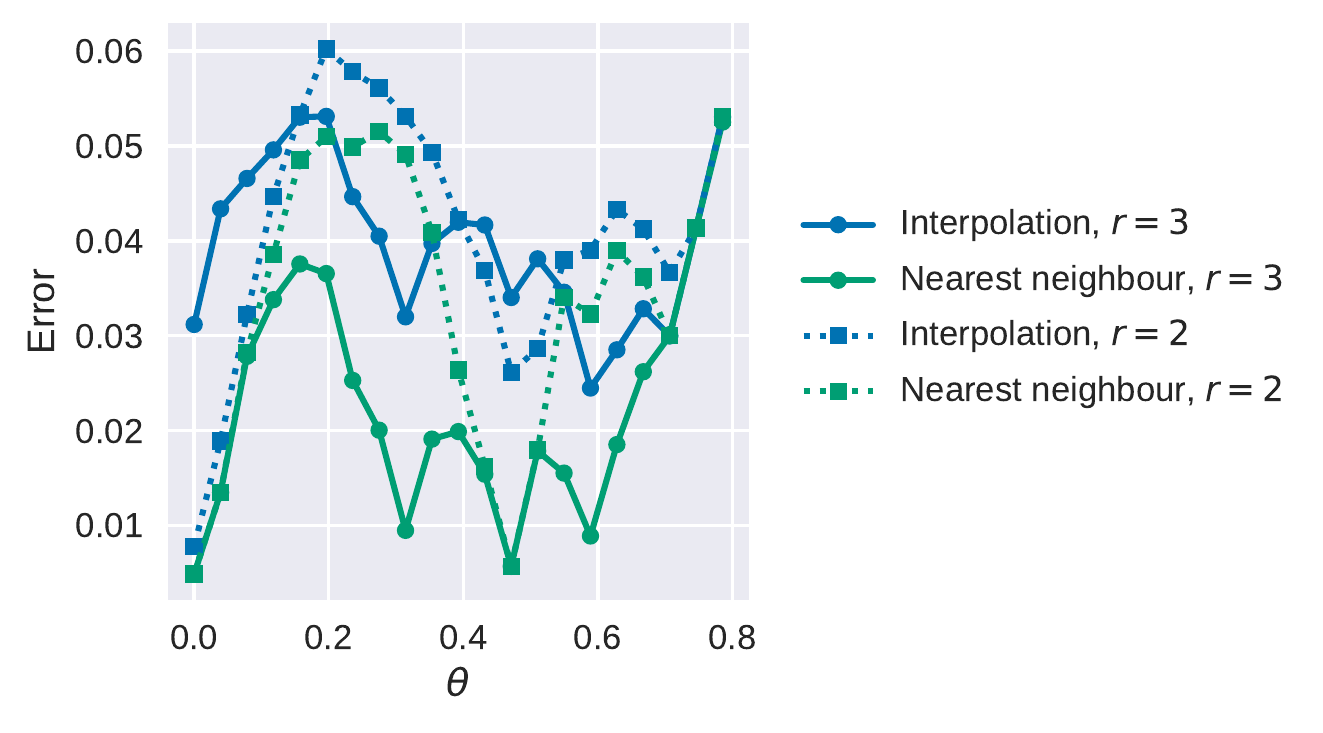}
    \caption{Error of the numerical solutions of the convex envelope PDE on a
    regular grid, as a function of rotation of the grid.}
    \label{fig:ang}
  \end{figure}

  \subsection{Pucci equation}
  Our next example is the Pucci equation,
  \begin{align}
    \begin{cases}
      \alpha \Lambda_+u(x) + \Lambda_- u(x) = 0 \qquad & x \in \Omega\\
      u(  x) = g( x) \qquad & x \in  \partial \Omega
    \end{cases}
    \label{eq:pucci}
  \end{align}
  where $\alpha$ is a positive scalar, and $\Lambda_- u$ and $\Lambda_+ u$ are
  respectively the minimal and maximal eigenvalues of the Hessian.  A convergent,
  monotone and stable finite difference scheme for the Pucci equation was first
  developed in \cite{oberman_wide_2008}. Following
  \cite{dean2005numerical,oberman_wide_2008}
  we take
  \begin{align}
    u(x,y) = -\rho^{1-\alpha},\quad \rho(x,y) = \sqrt{(x+2)^2 + (y+2)^2}.
    \label{eq:puc_ex}
  \end{align}
  We compute solutions on  the unit disc and the square $[-1,1]^2$.

  We discretized the square using a regular grid, and use either the nearest neighbour
  scheme, or the symmetric finite difference interpolation scheme presented in
  Section \ref{sec:eigs}. We use stencils of radius two or three. Errors and rates
  of convergence on the grid are presented in Table \ref{tab:pu} and in Figure
  \ref{fig:pugrid}. Both methods achieve roughly the same convergence rate before
  angular resolution error dominates. The nearest neighbour scheme achieves a
  slightly better error rate.

  As in the convex envelope example, we used DistMesh to triangulate the unit
  disc. Error and convergence rates are shown Table \ref{tab:pu} and Figure
  \ref{fig:pudisc}. Both methods achieve nearly $\mathcal O(1)$ convergence rate,
  which is better than predicted by our analysis. We hypothesize this is due to
  the fact that this example is smooth on the domain studied.
  \begin{figure}[t]
    \centering
    \begin{subfigure}[b]{0.911\textwidth}
      \includegraphics[width=\textwidth,left]{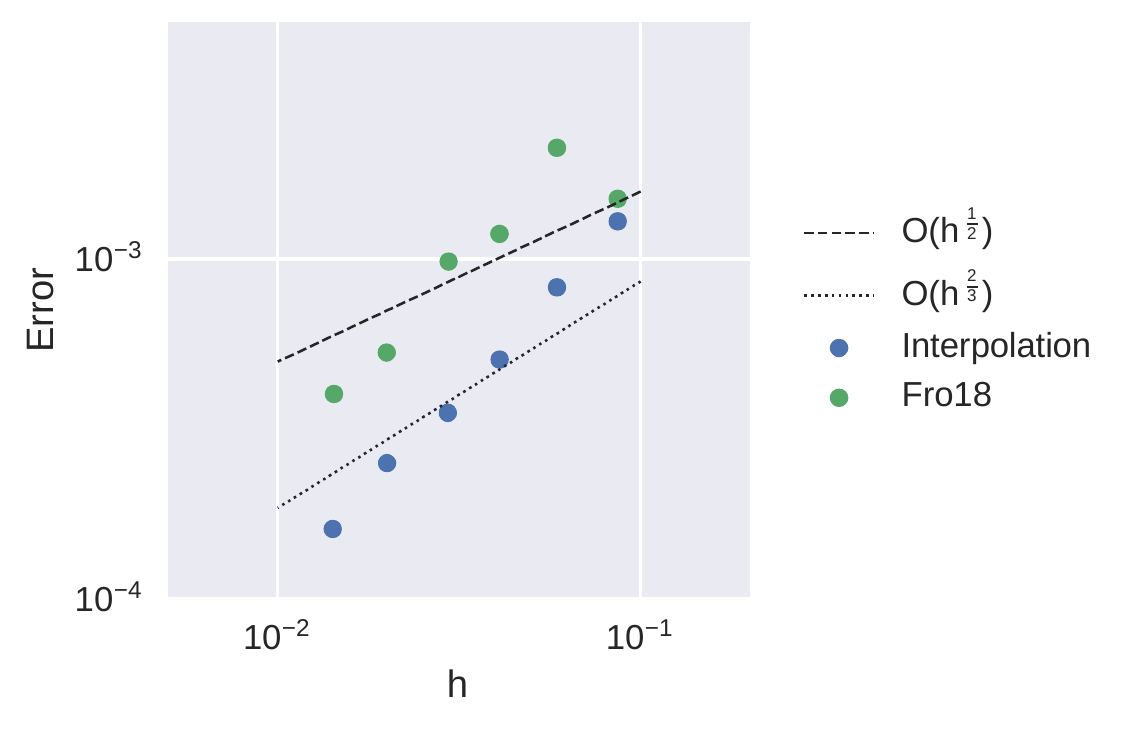}
      \caption{Triangular mesh}
      \label{fig:pudisc}
    \end{subfigure}
    \hfill
    \begin{subfigure}[b]{1\textwidth}
      \includegraphics[width=\textwidth,center]{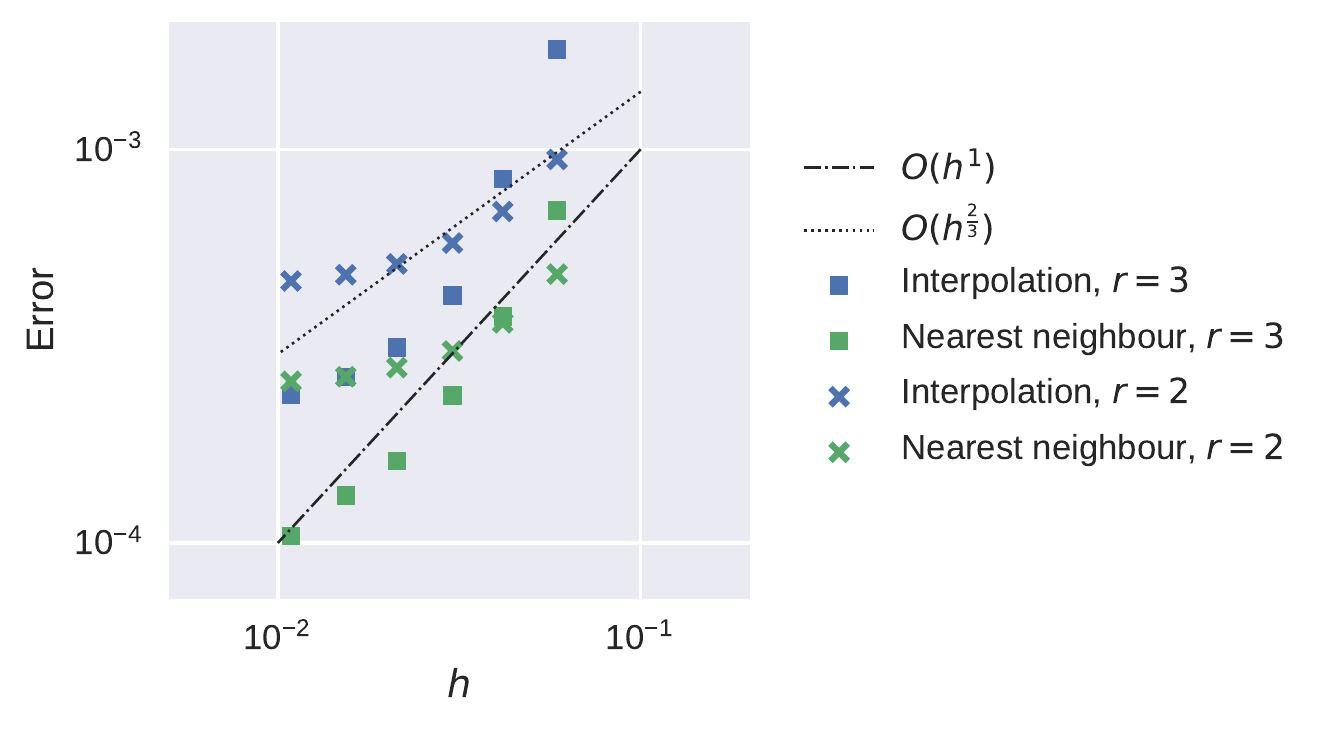}
      \caption{Regular grid}
      \label{fig:pugrid}
    \end{subfigure}
    \caption{Figure \ref{fig:pudisc}: Convergence plot for the Pucci equation on
      the unit disc with
      triangular mesh.  Figure \ref{fig:pugrid}: Convergence plot for the Pucci
      equation  on a regular grid over
    the square $[-1,1]^2$.}
    \label{fig:pu_rates}
  \end{figure}

  \begin{table}
  \centering
  \begin{tabular}{l r l l}
    \hline
    \hline
    \multicolumn{4}{l}{Triangular mesh, interpolation} \\
        $h$    &   $N$    & Error    &  rate \\
    \hline
    \num{8.6e-2}  &  427 & \num{1.3e-3} &       --  \\
    \num{5.0e-2}  &  785 & \num{8.3e-4} &  1.16  \\
    \num{4.1e-2}  & 1452 & \num{5.0e-4} &  1.34  \\
    \num{3.0e-2}  & 2713 & \num{3.5e-4} &  1.09 \\
    \num{2.0e-2}  & 5101 & \num{2.5e-4} &  0.88\\
    \num{1.4e-2}  & 9674 & \num{1.6e-4} &  1.29\\
    \hline
    \hline

    \multicolumn{4}{l}{Regular grid, interpolation, $r=2$} \\
        $h$    &   $N$    & Error    &  rate \\
    \hline
    \num{5.9e-2} &  392 & \num{9.4e-4}  &       --  \\
    \num{4.1e-2} &  721 & \num{7.0e-4}  & 0.88  \\
    \num{3.0e-2} & 1288 & \num{5.8e-4}  & 0.58  \\
    \num{2.1e-2} & 2492 & \num{5.1e-4}  & 0.35  \\
    \num{1.5e-2} & 4616 & \num{4.8e-4}  & 0.19  \\
    \num{1.0e-2} & 9017 & \num{4.6e-4}  & 0.11  \\
    \hline
    \hline

    \multicolumn{4}{l}{Regular grid, interpolation, $r=3$} \\
        $h$    &   $N$    & Error    &   rate \\
    \hline
    \num{5.9e-2}  &  528 & \num{1.e-3} &       --  \\
    \num{4.1e-2}  &  913 & \num{8.4e-4} & 2.20\\
    \num{3.0e-2}  & 1552 & \num{4.2e-4} & 2.14\\
    \num{2.1e-2}  & 2868 & \num{3.1e-4} & 0.86\\
    \num{1.5e-2}  & 5136 & \num{2.6e-4} & 0.53\\
    \num{1.0e-2}  & 9757 & \num{2.3e-4} & 0.30\\
    \hline
  \end{tabular}
  \hspace{1em}
  \begin{tabular}{l r l l}
    \hline
    \hline

    \multicolumn{4}{l}{Triangular mesh, Fro17} \\
        $h$    &    $N$    & Error    &  rate \\
    \hline
    \num{8.6e-2} &   427&  \num{1.5e-3}&        -- \\
    \num{5.0e-2} &   810&  \num{2.1e-3}& -0.90\\
    \num{4.1e-2} &  1533&  \num{1.2e-3}&   1.60\\
    \num{3.0e-2} &  2908&  \num{9.9e-4}&  0.58\\
    \num{2.0e-2} &  5526&  \num{5.3e-4}&   1.57\\
    \num{1.4e-2} & 10542&  \num{4.0e-4}&  0.841\\
    \hline
    \hline
    \multicolumn{4}{l}{Regular grid, Nearest neighbour, $r=2$} \\
        $h$    &   $N$    & Error    &  rate \\
    \hline
    \num{5.9e-2}  &  392 & \num{4.8e-4} &        -- \\
    \num{4.1e-2}  &  721 & \num{3.6e-4} &  0.83\\
    \num{3.0e-2}  & 1288 & \num{3.0e-4} &  0.52\\
    \num{2.1e-2}  & 2492 & \num{2.8e-4} &  0.28 \\
    \num{1.5e-2}  & 4616 & \num{2.6e-4} &  0.16\\
    \num{1.0e-2}  & 9017 & \num{2.6e-4} &  0.07\\
    \hline
    \hline

    \multicolumn{4}{l}{Regular grid, Nearest neighbour, $r=3$} \\
        $h$    &   $N$    & Error    &   rate \\
    \hline
        \num{5.9e-2} &  528 & \num{7.0e-4}  &       --  \\
        \num{4.1e-2} &  913 & \num{3.8e-4}  &  1.80  \\
        \num{3.0e-2} & 1552 & \num{2.4e-4}  &  1.45  \\
        \num{2.1e-2} & 2868 & \num{1.6e-4}  &  1.08  \\
        \num{1.5e-2} & 5136 & \num{1.3e-4}  & 0.62\\
        \num{1.0e-2} & 9757 & \num{1.0e-4}  &  0.68  \\
    \hline
  \end{tabular}

  \caption{Errors and convergence order for the Pucci equation.}
  \label{tab:pu}
\end{table}

  \subsubsection{Solver comparison}
  Finally, we performed a comparison of the three solvers (semi-smooth Newton, Euler, and a
  combination of the two) in terms of CPU time, for the Pucci equation on a
  regular grid. Results are presented in Table \ref{tab:wc}.

  As a function of number of grid points, the CPU time of Euler's method is roughly $\mathcal
  O(N^2)$ for both methods, interpolation and nearest neighbour. For the
  interpolation finite different schemes, both semi-smooth Newton and the
  combination solver is nearly $\mathcal O(N)$: we calculated a log-log line of
  best fit, and found semi-smooth Newton and the combination solver to be about
  $\mathcal O(N^{1.2})$.

  Of all solvers and finite difference methods, the nearest neighbour finite
  difference scheme with the combination solver achieves the best CPU time,
  followed by the semi-smooth Newton. However, as a function of number of grid
  points, the CPU time is roughly $\mathcal O(N^{1.75})$. This rate is worse than the
  interpolation finite different scheme, and so we expect on even larger grids,
  eventually the interpolation finite difference method would be faster with
  either semi-smooth Newton or the combination solver.

  \begin{figure}[t]
    \centering
    \begin{subfigure}[b]{0.48\textwidth}
      \includegraphics[width=\textwidth]{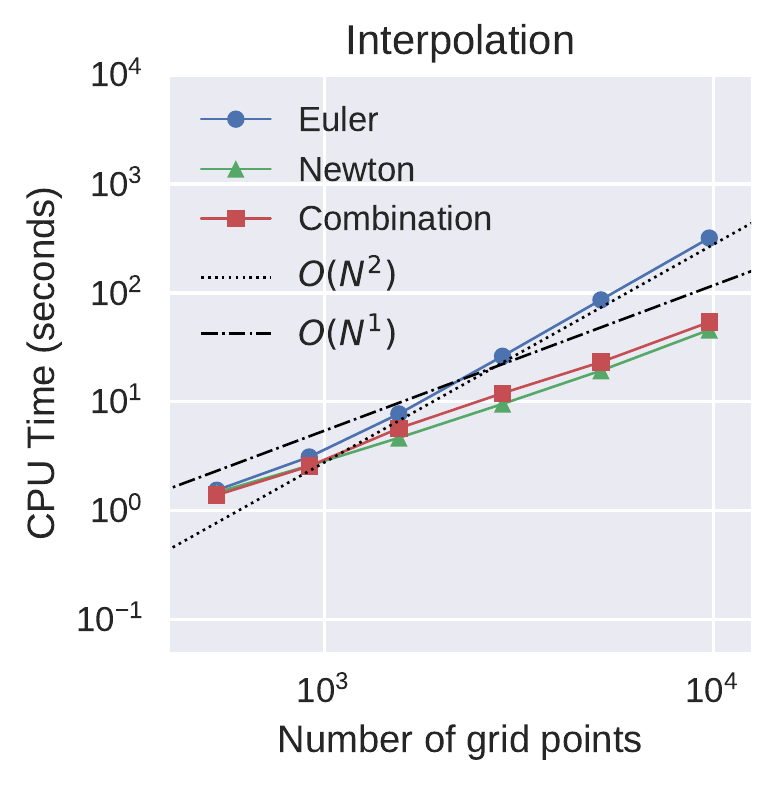}
    %\caption{Triangular mesh}
      \label{fig:solver-i}
    \end{subfigure}
    \hfill
    \begin{subfigure}[b]{0.45\textwidth}
      \includegraphics[width=\textwidth]{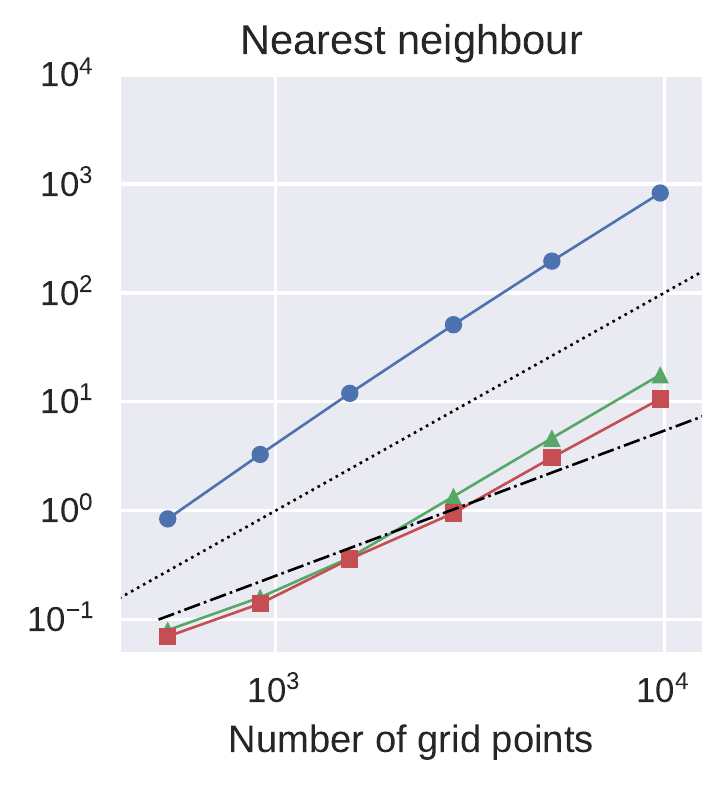}
    %\caption{Regular grid}
      \label{fig:solver-nn}
    \end{subfigure}
    \caption{CPU time taken to compute solution of the Pucci equation on a regular
    grid, with stencil width $r=3$, for both methods.}
    \label{fig:pu_rates}
  \end{figure}

  \begin{table}
    \centering
    \begin{tabular}{l | r r r r r r}
      \hline
      \hline
      \multicolumn{7}{l}{Interpolation, $r=2$} \\
      $N$    & 392 & 721 & 1288 & 2492 & 4616 & 9017 \\
      \hline
      Euler  & 1.16 & 3.22 & 9.43 & 34.39 & 125.49  &  500.98 \\
      Newton  & 0.74    &  1.29   &  2.70    &  5.86    &  12.32    & 28.86     \\
      Combination &  0.75   & 1.36    &  2.50    &  6.07    & 12.39     &
      28.35     \\
      \hline
      \hline
      \multicolumn{7}{l}{Nearest neighbour, $r=2$} \\
      $N$    & 392 & 721 & 1288 & 2492 & 4616 & 9017 \\
      \hline
      Euler  & 0.43 & 1.62 & 5.75 & 24.23  & 90.90  &  383.43    \\
      Newton  &  0.06   & 0.11    &  0.25    & 1.04     &  3.40    &  13.51    \\
      Combination & 0.05    & 0.10    &  0.23    &  0.73    &  2.66    &  9.54    \\
      \hline
      \hline
      \multicolumn{7}{l}{Interpolation, $r=3$} \\
      $N$    & 528  & 913 & 1552 & 2868 & 5136 & 9757 \\
      \hline
      Euler  & 1.54 & 3.11 & 7.73 & 26.17 & 86.05 & 317.98 \\
      Newton  & 1.47    & 2.59     & 4.66     & 9.54     & 19.29     &  45.68    \\
      Combination & 1.39    & 2.56    &  5.70    &  11.97    & 23.12     &
      53.77     \\
      \hline
      \hline
      \multicolumn{7}{l}{Nearest neighbour, $r=3$} \\
      $N$    & 528  & 913 & 1552 & 2868 & 5136 & 9757 \\
      \hline
      Euler  & 0.84 & 3.28 &  11.93 & 50.90 & 195.14 & 823.99 \\
      Newton  & 0.08    &  0.16   &  0.37    &  1.35    & 4.63     &  17.66    \\
      Combination & 0.07    &  0.14   &  0.36    & 0.95     & 3.07     &
      10.61    \\
      \hline
    \end{tabular}
    \caption{Comparison of wall clock time of solvers for the Pucci equation \eqref{eq:pucci} in two
      dimensions on a regular grid. Time is reported in seconds. Results
    are for stencils of either radius $r=2$ or $r=3$.}
    \label{tab:wc}
  \end{table}

  \bibliographystyle{alpha}
  \bibliography{refs}

  \end{document}